\newtheorem{theorem}{Theorem}[section]
\newtheorem{corollary}[theorem]{Corollary}
\newtheorem{proposition}[theorem]{Proposition}
\newtheorem{question}[theorem]{Question}
\def\cp{\,\square\,}
\newcommand{\gi}{\gamma_{\rm i}} 
\newcommand{\ec}{\rho} 
\newcommand{\ggr}{\gamma_{\rm gr}} 
\newcommand{\IR}{{\rm IR}} 
\renewcommand{\gg}{\gamma_{\rm g}}
\begin{document}

\title{Indicated domination game}

\author{Bo\v stjan Bre\v sar$^{a,b}$\thanks{Email: \texttt{bostjan.bresar@um.si}}
\and Csilla Bujt\'as$^{b,c,d}$\thanks{Email: \texttt{bujtasc@gmail.com}}
\and Vesna Ir\v si\v c$^{b,c}$\thanks{Email: \texttt{vesna.irsic@fmf.uni-lj.si}}
\and Douglas F.\ Rall$^{e}$\thanks{Email: \texttt{doug.rall@furman.edu}}
\and Zsolt Tuza$^{d,f}$\thanks{Email: \texttt{tuza.zsolt@mik.uni-pannon.hu}}}
\maketitle

\begin{center}
$^a$ Faculty of Natural Sciences and Mathematics, University of Maribor, Slovenia\\
\medskip

$^b$ Institute of Mathematics, Physics and Mechanics, Ljubljana, Slovenia\\
\medskip

$^c$ Faculty of Mathematics and Physics, University of Ljubljana, Slovenia\\
\medskip

$^d$ University of Pannonia, Veszpr\'em, Hungary\\
\medskip

$^e$ Furman University, USA\\
\medskip

$^f$ Alfr\'ed R\'enyi Institute of Mathematics, Budapest, Hungary\\
\medskip

\end{center}

\begin{abstract}
Motivated by the success of domination games and by a variation of the coloring game called the indicated coloring game, we introduce a version of domination games called the indicated domination game. It is played on an arbitrary graph $G$ by two players, Dominator and Staller, where Dominator wants to finish the game in as few rounds as possible while Staller wants just the opposite. In each round, Dominator indicates a vertex $u$ of $G$ that has not been dominated by previous selections of Staller, which, by the rules of the game, forces Staller to select a vertex in the closed neighborhood of $u$. The game is finished when all vertices of $G$ become dominated by the vertices selected by Staller. Assuming that both players are playing optimally according to their goals, the number of selected vertices during the game is the indicated domination number, $\gamma_{\rm i}(G)$, of $G$. 

We prove several bounds on the indicated domination number expressed in terms of other graph invariants. In particular, we find a place of the new graph invariant in the well-known domination chain, by showing that $\gamma_{\rm i}(G)\ge \Gamma(G)$ for all graphs $G$, and by showing that the indicated domination number is incomparable with the game domination number and also with the upper irredundance number. In connection with the trivial upper bound $\gamma_{\rm i}(G)\le n(G)-\delta(G)$, we characterize the class of graphs $G$ attaining the bound provided that $n(G)\ge 2\delta(G)+2$. We prove that in trees, split graphs and grids the indicated domination number equals the independence number. We also find a formula for the indicated domination number of powers of paths, from which we derive that there exist graphs in which the indicated domination number is arbitrarily larger than the upper irredundance number. We provide some partial results supporting the statement that $\gamma_{\rm i}(G)=n(G)/2$ if $G$ is a cubic bipartite graph, and leave this as an open question. 
\end{abstract}

\noindent
{\bf Keywords:} domination game; indicated coloring; independence number; upper domination number;

\noindent
{\bf AMS Subj.\ Class.\ (2020)}: 05C57

\section{Introduction}
\label{sec:intro}

The coloring game was introduced independently in \cite{ga-81} and \cite{bo-1991}. Unlike combinatorial games in which a winner is decided, the result of the coloring game gives a graph invariant, which is based on the assumption that both players are playing optimally according to their goals. A number of variants of the original game have been introduced, see e.g.\ \cite{bagr-07, borowiecki+2007listcol, mpw-2018, tz-2015}.
The following version was proposed by Grytczuk and defined by Grzesik~\cite{gr-2012}.

The \emph{indicated coloring game} is played on a simple graph $G$ by two players, and a fixed set $C$ of colors. In each round of the game Ann indicates an uncolored vertex, and Ben colors it using a color from $C$, obeying just the proper coloring rule. The goal of Ann is to achieve a proper coloring of the whole graph, while Ben is trying to prevent this. The minimum cardinality of the set of colors $C$ for which Ann has a winning strategy is called the \emph{indicated chromatic number}, $\chi_i(G)$, of a graph $G$.

The \emph{domination game}, as introduced in~\cite{bresar-2010}, is played on a graph $G$ by two players: Dominator and Staller. They alternate taking moves in which they select a vertex of $G$. A move is legal if the selected vertex dominates at least one vertex which is not already dominated by previously played vertices. The game ends when there are no legal moves, so when the set of played vertices is a dominating set of $G$. The goal of Dominator is to finish the game with the minimum number of moves, while the aim of Staller is to maximize the number of moves. If both players play optimally, then the number of moves played on $G$ is an invariant (see~\cite{book-2021, fractional-2019}).
Therefore, as defined in~\cite{bresar-2010}, the \emph{game domination number} $\gg(G)$ is the number of moves on $G$ if Dominator starts the game. Many variants of the domination game have been introduced; see a recent monograph~\cite{book-2021} and for example papers~\cite{borowiecki+2019connected,bujtas-2022a, bujtas-2022b, bujtas-2021,fractional-2019, duchene-2020, gledel-2020, henning-2015, xu-2018a, xu-2018b}. Having in mind the indicated coloring game, we propose the following variant of the domination game.

The \emph{indicated domination game} is played on a graph $G$ by two players, Dominator and Staller, who take turns making a move. In each of his moves, Dominator indicates a vertex $v$ of the graph that has not been dominated in the previous moves, and Staller chooses (or selects) any vertex from the closed neighborhood of $v$, and adds it to
a set $D$ that is being built during the game. The game ends when there is no undominated vertex left, that is, when $D$ is a dominating set. The goal of Dominator is to minimize the size of $D$, while Staller wants just the opposite. Provided that both players are playing optimally with respect to their goals, the size of the resulting set $D$ is the \emph{indicated domination number} of $G$, and is denoted by $\gi(G)$.

In the following section, we establish the notation and present basic definitions, while in Section~\ref{sec:prelim} we give some preliminary results. In particular, we prove that $\Gamma(G)\le \gi(G)\le \ggr(G)$, where $\Gamma(G)$ is the upper domination number and $\ggr(G)$ is the Grundy domination number of a graph $G$. In Section~\ref{sec:upper}, we prove that for a graph $G$ with minimum degree $\delta$ and order $n$, where $n \geq 2 \delta + 2$, we have $\gi(G) = n - \delta$ if and only if $G$ contains a spanning subgraph $K_{\delta, n - \delta}$ with an additional property that the part of the bipartition of size $n - \delta$ is an independent set in $G$. In Section~\ref{sec:alpha}, we prove that in several families of graphs (namely trees, split graphs, grids, and connected bipartite cubic graphs with at most 12 vertices) the indicated domination number equals the independence number. On the other hand, the indicated domination number can be arbitrarily larger than the upper irredundance number (and thus also the independence number), which is established in Section~\ref{sec:powers}. This is derived from the formula for the indicated domination number of the $k$-th power of the path $P_n$, which is roughly $\gi(P_n^k) = \Theta\Big(\frac{\log k}{k} \, n\Big)$ as $n\to\infty$.	In Section~\ref{sec:conclude}, we propose several open questions.

\section{Notation}
\label{sec:notation}

Let $G$ be a graph. We denote the number of vertices of $G$ by $n(G)$. If $S \subseteq V(G)$, then the subgraph induced by $S$ is denoted by $G[S]$. For a vertex $v \in V(G)$, the \emph{(open) neighborhood} $N(v)$ is the set of neighbors of $v$, and the \emph{closed neighborhood} is $N[v] = N(v) \cup \{v\}$. If $S \subseteq V(G)$, then $N[S] = \bigcup_{v\in S} N[v]$.

For a vertex $x \in S$, every vertex in $N[S] - N[S-\{x\}]$ is called a \emph{private neighbor of $x$ with respect to $S$}.\footnote{Observe that every vertex isolated in $G[S]$ is viewed as a private neighbor of itself by definition.} A set $S \subseteq V(G)$ is an \emph{irredundant set} if every vertex in $S$ has a private neighbor with respect to $S$. The smallest and largest cardinalities of a maximal irredundant set of $G$ are denoted by ${\rm ir}(G)$ and $\IR(G)$, respectively.

A set $S \subseteq V(G)$ is an \emph{independent set} in a graph $G$ if the vertices in $S$ are pairwise nonadjacent. The maximum size of an independent set of $G$ is denoted by $\alpha(G)$.  An \emph{edge cover} of $G$ is a set $F \subseteq E(G)$ such that every vertex of $G$ is incident to some edge in $F$. We denote the minimum size of an edge cover of $G$ by $\ec(G)$. Note that notation $\beta'(G)$ is also used in the literature. Recall that combining K\"onig's and Gallai's   Theorems gives $\alpha(G) = \ec(G)$ for a bipartite graph $G$ without isolated vertices.

A vertex $v \in V(G)$ \emph{dominates} itself and its neighbors. A subset of vertices $D \subseteq V(G)$ is a \emph{dominating set} of $G$ if it dominates all vertices of $G$, i.e.\ $N[D] = V(G)$. This means that every vertex from $V(G) - D$ has a neighbor in $D$. The minimum cardinality of a dominating set of $G$ is the \emph{domination number}, $\gamma(G)$, of $G$.

A dominating set $D$ in $G$ is a \emph{minimal dominating set} if no proper subset of $D$ is a dominating set. That is, $D$ is a minimal dominating set if and only if every $x \in D$ has a private neighbor with respect to $D$. The maximum cardinality of a minimal dominating set is the \emph{upper domination number}, $\Gamma(G)$, of $G$. We recall the following results.

\begin{theorem}[{\cite[Theorem 5]{co-fa-pa-th-1981}}] \label{thm:topthreebipartite}
	If $G$ is a bipartite graph, then $\alpha(G)=\Gamma(G)= \IR(G)$.
\end{theorem}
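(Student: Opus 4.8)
The plan is to combine the classical ``upper domination chain'' with a direct construction of a large independent set inside a maximal irredundant set. Since every maximal independent set is a minimal dominating set and every minimal dominating set is a maximal irredundant set, the inequalities $\alpha(G)\le\Gamma(G)\le\IR(G)$ hold for every graph $G$; I would record these as an immediate consequence of the definitions above. It then suffices to prove $\IR(G)\le\alpha(G)$ for bipartite $G$, which collapses the chain.

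So fix a bipartition $V(G)=X\cup Y$ and a maximal irredundant set $S$ with $|S|=\IR(G)$ (only irredundancy of $S$ will be used). For each $x\in S$ pick a private neighbour $p(x)$ with respect to $S$. If $x$ is isolated in $G[S]$ it is its own private neighbour; if $x$ has a neighbour in $S$ then one checks that every private neighbour of $x$ lies outside $S$, hence is a genuine neighbour of $x$ and so lies on the side of the bipartition opposite to $x$. Partition $S\cap Y$ into the set $I_Y$ of vertices isolated in $G[S]$ and the set $J_Y$ of the rest, and set
\[
A \;=\; (S\cap X)\ \cup\ I_Y\ \cup\ \{\,p(x)\,:\,x\in J_Y\,\}.
\]
The first thing to check is $|A|=|S|$: the three pieces are pairwise disjoint (each $p(x)$ with $x\in J_Y$ lies in $X\setminus S$, while $I_Y,J_Y\subseteq Y$, and $x\mapsto p(x)$ is injective because distinct vertices of $S$ have distinct private neighbours), so $|A|=|S\cap X|+|I_Y|+|J_Y|=|S\cap X|+|S\cap Y|=|S|$.

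The second, and main, thing to check is that $A$ is independent. The edges one must rule out are: within $S\cap X$, within $\{p(x):x\in J_Y\}$, and between these two sets --- all impossible since all of these vertices lie in $X$ and $G$ is bipartite; between $S\cap X$ and $I_Y$ --- impossible because a vertex of $I_Y$ is isolated in $G[S]$ and thus has no neighbour in $S$; and between $I_Y$ and $p(x)$ for some $x\in J_Y$ --- impossible because $p(x)$, being a private neighbour of $x$, has no neighbour in $S\setminus\{x\}$, and $I_Y\subseteq S\setminus\{x\}$. Hence $A$ is an independent set with $|A|=\IR(G)$, so $\alpha(G)\ge\IR(G)$ and the chain collapses. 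The only delicate point --- the step I would be most careful about --- is the assertion that a vertex of $S$ that is non-isolated in $G[S]$ has all of its private neighbours outside $S$, together with the correct handling of the convention (from the footnote) that a vertex isolated in $G[S]$ is its own private neighbour; the remainder is disjointness counting and bipartiteness.
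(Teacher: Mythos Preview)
The paper does not give its own proof of this statement; it is quoted verbatim as a known result from \cite{co-fa-pa-th-1981} and used as a black box. So there is nothing in the present paper to compare against.

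That said, your argument is correct. The chain $\alpha(G)\le\Gamma(G)\le\IR(G)$ is standard, and your construction of the independent set $A$ from a maximum irredundant set $S$ is clean: keeping all of $S\cap X$, keeping the $G[S]$-isolates in $S\cap Y$, and swapping each non-isolated $y\in S\cap Y$ for a private neighbour $p(y)\in X\setminus S$. The three disjointness checks and the five edge checks are all valid; the one you flagged as delicate---that a non-isolated vertex of $S$ cannot be its own private neighbour, so $p(y)$ is a genuine neighbour lying on the opposite side and outside $S$---is handled correctly. Injectivity of $p$ follows exactly as you say. This is in fact essentially the argument of Cockayne, Favaron, Payan and Thomason, so you have reconstructed the original proof.
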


\begin{theorem}[{\cite[Theorem 9]{jacobsen-1990}}]
	\label{thm:topthreechordal}
	If $G$ is a chordal graph, then $\alpha(G)=\Gamma(G)= \IR(G)$.
\end{theorem}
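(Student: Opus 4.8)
\medskip
\noindent\textbf{Sketch of a possible proof.} I would first peel off the easy half of the chain, which holds for \emph{every} graph: $\alpha(G)\le\Gamma(G)\le\IR(G)$. A maximum independent set is maximal, hence dominating, and it is a minimal dominating set because removing any of its vertices $x$ leaves $x$ undominated (all neighbours of $x$ lie outside the set); this gives $\alpha(G)\le\Gamma(G)$. Every minimal dominating set $D$ is a maximal irredundant set: it is irredundant since each vertex of $D$ has a private neighbour, and no $w\notin D$ can be adjoined, for $w$ would then need a private neighbour inside $N[w]\setminus N[D]=\emptyset$. So $\Gamma(G)\le\IR(G)$, and it remains to prove $\IR(G)\le\alpha(G)$ for chordal $G$. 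I would actually prove the more convenient statement by induction on $|V(G)|$: \emph{if $G$ is chordal and $S\subseteq V(G)$ is irredundant, then $\alpha(G)\ge|S|$}; applying it to an $S$ of size $\IR(G)$ gives the theorem, and the equality with $\alpha$ and $\Gamma$ then follows from the chain above.

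The mechanism of the induction is to delete a carefully chosen set and keep irredundancy, which is possible because a chordal graph, and each of its induced subgraphs, has a simplicial vertex (one whose neighbourhood is a clique). Fix such a vertex $v$ and put $K=N[v]$. Two remarks about $S$ and the clique $K$ do most of the work. (a) If some $x\in S$ has a private neighbour lying in $K$, then $S\cap K\subseteq\{x\}$: such a private neighbour is non-adjacent to, and distinct from, every other vertex of $S$, so the clique $K$ cannot contain a vertex of $S$ other than $x$. (b) If $S\cap K\ne\emptyset$ and $x\in S\setminus K$, then every private neighbour of $x$ lies outside $K$ (same reasoning). I also use the standard facts that $\alpha(G)\ge\alpha(G-v)$ always, and $\alpha(G)=\alpha(G-K)+1$ when $v$ is simplicial.

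Now I split the induction step according to $S\cap K$. If $S\cap K=\emptyset$, then $v$ is non-adjacent to $S$, deleting $v$ affects no closed neighbourhood relevant to $S$, so $S$ remains irredundant in $G-v$; the induction hypothesis together with $\alpha(G)\ge\alpha(G-v)$ closes this case. If $v\in S$, then remark (a) with $x=v$ forces $S\cap K=\{v\}$, hence $S\setminus\{v\}\subseteq V(G-K)$; by remark (b) each $x\in S\setminus\{v\}$ retains a private neighbour after deleting $K$ (that neighbour avoids $K$), so $S\setminus\{v\}$ is irredundant in $G-K$, and the induction hypothesis gives $\alpha(G)=\alpha(G-K)+1\ge|S|$. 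Finally suppose $v\notin S$ and $S\cap K\ne\emptyset$. If $v$ is the private neighbour of no vertex of $S$, then $S$ is still irredundant in $G-v$ and we reduce as before. Otherwise $v$ is a private neighbour of some $u\in S$; then $u\in K$, and remark (a) forces $S\cap K=\{u\}$, so $u$ is the unique neighbour of $v$ in $S$. Replacing $u$ by $v$ produces an irredundant set $S'$ with $|S'|=|S|$ and $v\in S'$, which drops us into the already-settled case $v\in S'$ — and that case recurses directly on $G-K$, so there is no circularity.

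The a priori obstacle, and the reason a crude induction fails, is that $\IR$ is not monotone under deleting a vertex or an edge; passing to the clique $K=N[v]$ around a simplicial vertex is precisely what tames this. The one step that needs genuine care is the exchange $u\mapsto v$ in the last case: one must check it preserves irredundancy, namely that $v$ acquires $v$ itself as a private neighbour relative to $S'$ (its sole $S$-neighbour $u$ has been removed), while every surviving $x\in S\setminus\{u\}$ keeps a private neighbour — and it does, since by remark (b) that neighbour lies outside $K=N[v]$ and so is not absorbed by adjoining $v$.
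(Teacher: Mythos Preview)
The paper does not give its own proof of this theorem; it is quoted from Jacobson and Peters (1990) and used as a black box. So there is nothing to compare against in the present paper.

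Your argument itself is correct. The chain $\alpha\le\Gamma\le\IR$ is standard. For the reverse inequality $\IR\le\alpha$ in chordal graphs, your induction via a simplicial vertex $v$ with $K=N[v]$ goes through cleanly: remarks (a) and (b) are valid because $K$ is a clique, the identity $\alpha(G)=\alpha(G-K)+1$ holds for simplicial $v$, and each of your three cases reduces to a strictly smaller chordal graph. The delicate point you flag---the exchange $u\mapsto v$ in the last subcase---is handled properly: $v$ becomes its own private neighbour in $S'$ since its unique $S$-neighbour $u$ was removed, and every $x\in S\setminus\{u\}$ keeps a private neighbour outside $K=N[v]$ by (b), so adjoining $v$ does not absorb it. The recursion then proceeds on $G-K$, not on $G$ again, so there is no circularity.

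This simplicial-vertex peeling is exactly the kind of argument one expects for chordal graphs and is in the spirit of the original source, which also exploits the perfect elimination structure.
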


Given a graph $G$, a sequence $S=(v_1,\ldots,v_k)$ of vertices of $G$ is a {\em dominating sequence} if for each $i$
\begin{equation}
	\label{eq:grundy}
	N[v_i] - \cup_{j=1}^{i-1}N[v_j]\not=\emptyset
\end{equation}
and the set of vertices from $S$ dominates $G$.
We call the length $k$ of the longest such sequence $S$ the {\em Grundy domination number}, $\ggr(G)$, of $G$. Clearly, $\gamma(G) \leq \ggr(G)$.

\section{Preliminary results}
\label{sec:prelim}

A well known domination chain~\cite{haynes-1998} was extended with the Grundy domination  number in~\cite{bresar-2014}:
\[ {\rm ir}(G) \le \gamma(G) \le i(G) \le \alpha(G) \le \Gamma(G) \le {\rm IR}(G) \le \ggr(G)\,.\]
The following proposition explains where indicated domination number fits in this chain.

\begin{proposition}
	\label{prop:diagram}
	If $G$ is a graph, then
	$$\Gamma(G) \le \gi(G) \le \ggr(G)\,.$$
	In addition, $\gi$ is incomparable with both $\gg$ and $\IR$.
\end{proposition}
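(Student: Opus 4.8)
The statement breaks into two chain inequalities and two incomparabilities, and I would treat them in that order.

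\emph{The bound $\gi(G)\le\ggr(G)$ needs no strategy.} Consider an arbitrary play of the indicated game and let $s_1,s_2,\dots,s_k$ be the vertices Staller selects, in order. When Dominator indicates $v_i$, that vertex is still undominated, i.e. $v_i\notin N[s_1]\cup\cdots\cup N[s_{i-1}]$, while $v_i\in N[s_i]$; hence $N[s_i]-\bigcup_{j<i}N[s_j]\neq\emptyset$, so condition~(\ref{eq:grundy}) holds, and at the end $\{s_1,\dots,s_k\}$ dominates $G$. Thus $(s_1,\dots,s_k)$ is a dominating sequence, so $k\le\ggr(G)$ in \emph{every} play, whence $\gi(G)\le\ggr(G)$. (Counting the same footprints, with $v_1$ contributing $|N[s_1]|\ge\delta(G)+1$, also recovers the trivial bound $\gi(G)\le n(G)-\delta(G)$.)

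\emph{The bound $\gi(G)\ge\Gamma(G)$ comes from a Staller strategy.} Fix a minimal dominating set $M$ with $|M|=\Gamma(G)$. Whenever Dominator indicates an undominated vertex $v$, Staller selects any vertex of $N[v]\cap M$; this intersection is nonempty because $M$ dominates, and the selected vertex is not yet in $D$ (otherwise $v$ would already be dominated, since $D\subseteq M$ throughout the play). Hence $D$ grows by exactly one vertex of $M$ per round and stays inside $M$; because $M$ is \emph{minimal}, $D$ is a dominating set only when $D=M$, so the game lasts exactly $|M|=\Gamma(G)$ rounds against this strategy. Therefore $\gi(G)\ge\Gamma(G)$.

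\emph{One side of each incomparability is easy.} To see that $\gi$ can exceed $\gg$, take the star $K_{1,m}$ with $m\ge 2$: Dominator's first move of the domination game can be the centre, so $\gg(K_{1,m})=1$, while the leaf set is a maximum minimal dominating set, giving $\Gamma(K_{1,m})=m$; combined with $\gi\le n-\delta=m$ this yields $\gi(K_{1,m})=m>1=\gg(K_{1,m})$. To see that $\gi$ can exceed $\IR$, I would invoke Section~\ref{sec:powers}: since $\gi(P_n^k)=\Theta\!\big(\tfrac{\log k}{k}\,n\big)$ while $\IR(P_n^k)\le\alpha(P_n^k)=\lceil n/(k+1)\rceil$, one has $\gi(P_n^k)>\IR(P_n^k)$ for any fixed sufficiently large $k$ and $n\to\infty$ (and Theorems~\ref{thm:topthreebipartite},~\ref{thm:topthreechordal} are not in the way here, as $P_n^k$ is chordal but the gap is still present because the relevant invariant that $\gi$ beats is $\alpha$, not $\ggr$).

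\emph{The reverse directions are the main obstacle.} It remains to produce a graph with $\gg(G)>\gi(G)$ and one with $\IR(G)>\gi(G)$, and this is the only non-routine part. For the first, the constraints $\gi(G)\ge\Gamma(G)\ge\alpha(G)$ and $\gg(G)\le 2\gamma(G)-1$ force a witness to satisfy $\alpha(G)<\gg(G)$, hence to have $\gamma(G)$ unusually close to $\alpha(G)$ and a game domination number near its worst case for Dominator; I would search for such a graph among small graphs (if necessary by computer) or inside a structured family where $\gi$ can be determined exactly — e.g. a suitable power of a path or of a cycle, paired with an upper bound on its game domination number. For the second, a witness must by Theorems~\ref{thm:topthreebipartite} and~\ref{thm:topthreechordal} be neither bipartite nor chordal, so I would begin from a known graph $G_0$ with $\IR(G_0)>\Gamma(G_0)$ (and, if needed, take disjoint copies to widen the gap while keeping $\gi$ additive), the crux being to bound $\gi(G_0)$ from above — by an explicit Dominator strategy or via $\gi\le\ggr$ — tightly enough to conclude $\gi(G_0)<\IR(G_0)$.
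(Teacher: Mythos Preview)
Your arguments for the two chain inequalities and for the directions $\gi>\gg$ (stars) and $\gi>\IR$ (powers of paths) match the paper's proof essentially verbatim. One slip: you write $\IR(P_n^k)\le\alpha(P_n^k)$, but the inequality goes the other way in general; what saves you is that $P_n^k$ is chordal, so Theorem~\ref{thm:topthreechordal} gives equality, and your conclusion stands.

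The genuine gap is that you do not actually exhibit graphs with $\gg(G)>\gi(G)$ or with $\IR(G)>\gi(G)$; you only describe a search plan. A proof of incomparability requires concrete witnesses. The paper supplies both. For $\IR>\gi$ it takes $H_n=K_n\cp K_2$ with one matching edge $u_1v_1$ removed: then $\{u_2,\dots,u_n\}$ is a maximal irredundant set, so $\IR(H_n)=n-1$, while Dominator can indicate $u_1$ and then $v_1$, forcing each selection to dominate an entire $K_n$, so $\gi(H_n)=2$. For $\gg>\gi$ it takes the graph $D_1$ obtained from $C_9$ by adding the chords $x_1x_3$, $x_4x_6$, $x_7x_9$: indicating the three degree-$2$ vertices shows $\gi(D_1)=3$, whereas $\gg(D_1)=4$; taking $n$ disjoint copies (your additivity remark for $\gi$ is correct) then gives $\gg(D_n)-\gi(D_n)\ge n$. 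Your heuristics point in roughly the right direction---the $\IR$ witness is indeed non-bipartite and non-chordal, and the $\gg$ witness does have $\gamma$ close to $\alpha$---but the proof is not complete until such examples are written down and verified.
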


\begin{proof}
	Note that the definition of the indicated domination game implies that the sequence of vertices selected by Staller is a dominating sequence, so $\gi(G) \leq \ggr(G)$.
	
	Let $D$ be a minimal dominating set of cardinality $\Gamma(G)$. We provide a strategy for Staller that will prove that $\Gamma(G) \le \gi(G)$.  The basic ingredient of this strategy is for her to always select a vertex from $D$. She can do so regardless of which vertex Dominator indicates. Unless all vertices from $D$ have been selected, there exists an undominated vertex (a private neighbor of an unselected vertex from $D$). This shows that $\Gamma(G) \le \gi(G)$.
	
	In the rest of the proof we present infinite families of graphs showing that  $\gi$ is incomparable with both $\gg$ and $\IR$. Stars $K_{1,n}$ show that $\gi$ can be arbitrarily larger than $\gg$, since $\gi(K_{1,n}) = n$ and $\gg(K_{1,n}) = 1$. Second powers of paths (see Corollary~\ref{cor:gi-IR-plus}) show that $\gi$ can be arbitrarily larger than $\IR$.
	
	To see that $\IR$ can be arbitrarily larger than $\gi$, we consider the following family of graphs. Let $H_n$ be the graph obtained from two disjoint copies of $K_n$ with vertices $u_1, \ldots, u_n$ and $v_1, \ldots, v_n$ by adding edges $u_i v_i$ for all $i \in \{2, \ldots, n\}$. Notice that $H_n$ is isomorphic to the graph $K_n \cp K_2$ with one edge removed. It is easy to see that $\IR(H_n) = n-1$ (it is attained for example by the maximal irredundant set $\{u_2, \ldots, u_n\}$). On the other hand, we have $\gi(H_n) \geq \gamma(H_n) = 2$, and if Dominator first indicates $u_1$ and then $v_1$, we get $\gi(H_n) = 2$. Note that this also shows that in general $\gi(G)$ and $\gi(G-e)$ can be arbitrarily far apart since $\gi(K_n \cp K_2) \ge \Gamma(K_n \cp K_2)=n$.

	Last, we present a family of graphs for which $\gg$ is arbitrarily larger than $\gi$. Let $D_1$ be a graph obtained from a cycle $C_9$ with vertices $x_1, \ldots, x_9$ and naturally defined edges between them by adding edges $x_1 x_3$, $x_4 x_6$ and $x_7 x_9$. Since Dominator can in turn indicate vertices $x_2$, $x_5$ and $x_8$, he has a strategy that ensures $\gi(D_1) \leq 3$. Since $\gi(D_1) \geq \gamma(D_1) = 3$, we have $\gi(D_1) = 3$. On the other hand, it is easy to see that $\gg(D_1) = 4$. Let $D_n$ be a disjoint union of $n$ copies of the graph $D_1$. Since $\gi(D_n) \geq \gamma(D_n) = 3n$ and Dominator can in turn indicate vertices of degree two, we have $\gi(D_n) = 3n$. But since Staller's strategy in the domination game on $D_n$ can be to reply in the same copy of $D_1$ as Dominator played, we get $\gg(D_n) \geq 4n$ (because under that strategy, in each copy the game ends after an even number, four, of moves). Thus $\gg(D_n) - \gi(D_n) \geq n$. Alternatively, we can consider a connected graph $E_n$ obtained from $C_{3n}$ by adding edges $x_1 x_3, x_4 x_6, \ldots, x_{3n-2} x_{3n}$. In this case, $\gi(E_n) = n$, while $\gg(E_n) \geq n + \frac{n}{10}$ (the last follows by an argument similar as in~\cite[Theorem 4.1]{kosmrlj-2014}).
\end{proof}

The diagram in Figure~\ref{fig:hasse} shows the relations between the parameters from Proposition~\ref{prop:diagram}.

\begin{figure}[ht!]
	\begin{center}
		\begin{tikzpicture}
			\node (0) at (0,0) {${\rm ir}$};
			\node (1) at (0,1) {$\gamma$};
			\node (2) at (0,2) {$i$};
			\node (3) at (0,3) {$\alpha$};
			\node (4) at (0,4) {$\Gamma$};
			\node (5) at (-1.5,5) {$\mathbf{\gi}$}; 
			\node (6) at (0,5) {$\IR$};
			\node (7) at (1.5,5) {$\gg$};
			\node (8) at (0,6) {$\ggr$};
			
			\draw (0) -- (1) -- (2) -- (3) -- (4) -- (5) -- (8);
			\draw (4) -- (6) -- (8);
			\draw (1) -- (7) -- (8);
			
		\end{tikzpicture}
	\end{center}
	\caption{Relations between the domination and independence invariants.}
	\label{fig:hasse}
\end{figure}
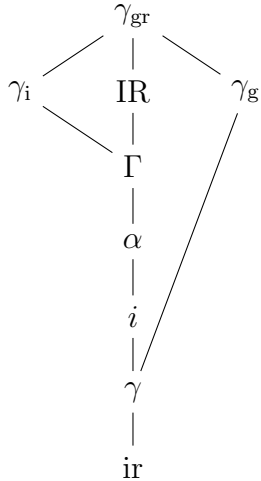

There are several classes of graphs with a transparent structure where the parameters discussed in Proposition~\ref{prop:diagram} coincide. For example, the graphs $G$ below all have the property $\Gamma(G)=\ggr(G)$ as listed in~\cite{bacso-2023}: hypercubes; complete multipartite graphs, $G=K_{n_1,\ldots,n_k}$, such that $n_1 \ge \cdots \ge n_k$, $k \ge 2$, and $n_{k-1}\ge 2$; prisms over complete graphs, $G= K_n \cp K_2$, for $n \ge 2$; large families of Kneser graphs; the class of (twin-free, connected) cographs. It is also proved in~\cite{bacso-2023} that the join operation preserves that property. By Proposition~\ref{prop:diagram} all the graphs from the above families have the indicated domination number equal to their upper domination number. By Theorem~\ref{thm:topthreebipartite} this in turn implies that the bipartite graphs among the above families have their indicated domination number equal to their independence number.

However, the difference between the indicated domination number and the independence number can be arbitrarily large. For example, $\alpha(K_n \cp K_2) = 2$ and $\gi(K_n \cp K_2) = n$. Similarly, the difference between the Grundy domination number and the indicated domination number can be arbitrarily large. For example, $\ggr(P_n) = n-1$ and $\gi(P_n) = \left \lceil \frac{n}{2} \right \rceil$ as will be proven in Corollary~\ref{cor:paths}.

\section{Upper bound on $\gi$ and extremal graphs}
\label{sec:upper}

Using a bound from Proposition~\ref{prop:diagram} combined with~\cite[Proposition 2.1]{bresar-2014} we obtain:
\begin{equation}
\label{eq:bounds}
\gi(G) \leq \ggr(G) \leq n(G) - \delta(G).
\end{equation}

In the following theorem we characterize the graphs $G$ that attain the bound $\gi(G) = n(G) - \delta(G)$
under the assumption that $n(G) \ge 2\delta(G)+2$.

\begin{theorem}
	\label{thm:extremal}
	A graph $G$ with minimum degree $\delta$ and order $n$, where $n \geq 2 \delta + 2$, has $\gi(G) = n - \delta$ if and only if it contains a spanning subgraph $K_{\delta, n - \delta}$ with an additional property that the part of the bipartition of size $n - \delta$ is an independent set in $G$.
\end{theorem}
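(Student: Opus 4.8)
The plan is to prove the two directions separately. For the easy ("if") direction, suppose $G$ contains a spanning $K_{\delta,n-\delta}$ whose larger side $B$ (of size $n-\delta$) is independent in $G$; write $A$ for the smaller side, $|A|=\delta$. I would give Staller a strategy forcing $n-\delta$ selections. Her rule: whenever Dominator indicates a vertex $v$, she selects a vertex of $B$ from $N[v]$ — this is always possible since every vertex of $A$ has all of $B$ as neighbours, and every vertex of $B$ is its own neighbour and lies in $B$. Since $B$ is independent, a selected vertex of $B$ dominates (besides itself) only vertices of $A$; hence as long as some vertex of $B$ is unselected, that vertex is still undominated (its only neighbours are in $A$, which cannot dominate it without... wait — vertices of $A$ do dominate vertices of $B$). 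Here I must be a little more careful: I would instead argue that Dominator can never indicate an already-undominated vertex of $B$ once it has been selected, and that each of Staller's moves adds exactly one new vertex of $B$ to $D$; the game cannot end until all of $B$ is dominated, and the only way Staller ever dominates a vertex of $B$ is by selecting it (a vertex of $A$ indicated by Dominator forces Staller into $B$, dominating one new $B$-vertex). Thus at least $|B| = n-\delta$ rounds occur, so $\gi(G) \ge n-\delta$; combined with~\eqref{eq:bounds} this gives equality.

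For the hard ("only if") direction, assume $\gi(G) = n-\delta$ and $n \ge 2\delta+2$. The goal is to extract the claimed bipartite spanning subgraph from the extremality. The natural approach is contrapositive-flavoured: I would exhibit a Dominator strategy that finishes in at most $n-\delta-1$ rounds whenever $G$ does \emph{not} contain such a $K_{\delta,n-\delta}$ with independent large side. A convenient reformulation: $G$ has the required structure iff there is an independent set $B$ of size $n-\delta$ such that every vertex of $V(G)\setminus B$ is adjacent to all of $B$ — equivalently, iff there is a set $A$ of $\delta$ vertices with $N[V(G)\setminus A]\supseteq$... I would phrase it as: $B=V(G)\setminus A$ is independent and $A$ dominates $B$ completely. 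Note $|A|=\delta$ forces every vertex of $A$ to have degree exactly $\delta$ and to be adjacent to no other vertex of $A$, while every vertex of $B$ has all its neighbours in $A$, so also has degree $\le\delta$, hence degree exactly $\delta$, i.e. $G$ is $\delta$-regular and $G=K_{\delta,n-\delta}$ exactly on these parts — so in fact the extremal graphs are precisely $\delta$-regular with $G\cong K_{\delta,n-\delta}$ (as an abstract graph, with $B$ independent). That rigidity is the key structural observation to aim for.

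The main work, and the expected obstacle, is the Dominator strategy showing $\gi(G)\le n-\delta-1$ in the non-extremal case. I would have Dominator play to create a single round in which Staller is forced to dominate at least two previously-undominated vertices at once (a "two-for-one" move), which suffices since then the total drops below $n-\delta$. Dominator can achieve this if he can indicate a vertex $v$ such that every vertex in $N[v]$ that Staller might pick dominates $\ge 2$ undominated vertices — for instance, early in the game, if $v$ has a neighbour $w$ that itself has a second neighbour distinct from $v$ among the undominated vertices, \emph{and} $v$ itself is such that picking $v$ (rather than a neighbour) also covers $\ge 2$. I would analyze cases: (i) if some vertex $v$ has $\deg(v)\ge\delta+1$, indicating $v$ first lets Staller's reply cover $\ge 2$ vertices unless Staller picks an isolated-looking vertex, which I can rule out near the start; (ii) if $G$ is $\delta$-regular but not of the form $K_{\delta,n-\delta}$ with $B$ independent, then either two vertices of the intended independent side are adjacent, or some vertex outside misses a vertex of $B$ — in either case Dominator finds, at some stage, an indicated vertex all of whose legal answers are two-for-one. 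The hypothesis $n\ge 2\delta+2$ is what guarantees there is enough "room" (enough undominated vertices remaining) for such a doubling move to still exist before the game is forced to end; I expect the delicate part to be a careful bookkeeping argument that this room persists, i.e. that Staller cannot pre-empt the doubling by her choices within closed neighbourhoods. This case analysis, organized around the regularity dichotomy above, is where the real effort lies.
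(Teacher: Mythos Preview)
Your ``if'' direction, once the confusion is untangled, is exactly the paper's argument: Staller's first selection (any vertex of $B$) dominates all of $A$, so thereafter Dominator must indicate vertices of $B$ and Staller selects the indicated vertex, yielding $|B|=n-\delta$ rounds. Your exposition wobbles but the content is fine.

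The ``only if'' plan, however, rests on a misreading of the theorem. You assert that the structural condition forces every vertex of $A$ to have degree exactly~$\delta$ and hence that the extremal $G$ are $\delta$-regular and equal to $K_{\delta,n-\delta}$. This is false: the hypothesis is only that $K_{\delta,n-\delta}$ is a \emph{spanning subgraph} and that the large side $B$ is independent in $G$; edges inside $A$ are allowed. Each vertex of $A$ already has $n-\delta\ge\delta+2$ neighbours in $B$, so vertices of $A$ have degree strictly larger than~$\delta$. The extremal graphs are precisely the joins of an independent set of size $n-\delta$ with an \emph{arbitrary} graph on $\delta$ vertices. Your case split ``(i) some vertex has degree $\ge\delta+1$ / (ii) $G$ is $\delta$-regular'' is therefore aimed at the wrong target: case~(i) contains all the extremal graphs, so it cannot possibly yield a Dominator strategy finishing in $n-\delta-1$ moves.

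The paper avoids the contrapositive entirely and argues directly. From $\gi(G)=n-\delta$ one sees that, against \emph{any} Dominator strategy, Staller's first selection must dominate exactly $\delta+1$ vertices (hence is a vertex $x$ of degree~$\delta$) and every later selection must dominate exactly one new vertex. Fixing such an $x$ and setting $R=V(G)\setminus N[x]$, one deduces that every vertex of $R$ has a neighbour in $N(x)$ (else Dominator indicates it and forces a two-for-one). The heart of the argument is then to suppose some $w_0\in R$ has a neighbour in $R$ and derive a contradiction: each $w_i\in N[w_0]\cap R$ must be the \emph{unique} $R$-neighbour of some $u_{j_i}\in N(x)$ (otherwise Dominator indicates $w_i$ and forces a two-for-one), and counting shows these $u_{j_i}$ together with the $N(x)$-neighbours of $w_0$ exhaust $N(x)$. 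The hypothesis $n\ge 2\delta+2$ then guarantees a further vertex $z\in R\setminus\{w_0,\dots,w_k\}$, which can neither have an $R$-neighbour nor satisfy $N(z)=N(x)$, the desired contradiction. Hence $R\cup\{x\}$ is independent and is the large side of the required $K_{\delta,n-\delta}$. This direct extraction of structure from the ``every later move dominates exactly one new vertex'' constraint is the key idea your plan is missing.
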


\begin{proof}
Let $G$ be a graph, which has a spanning subgraph $K_{\delta, n - \delta}$ and the only edges of $G$ that are not in $K_{\delta, n - \delta}$, if any, are between two vertices in the part of size $\delta$. Let the indicated domination game be played on $G$. Regardless of which vertex is indicated by Dominator in his first move, Staller selects a vertex from the part of size $n-\delta$. In this way, all vertices in the part of size $\delta$ become dominated after the first selection of Staller. Therefore, Dominator will in his subsequent moves need to indicate only vertices in the part of size $n-\delta$, and Staller's strategy is simply to select the vertex that Dominator indicated in his move. In this way, Staller ensures that at least $n-\delta$ vertices will be chosen. Thus, $\gi(G)\ge n - \delta$, and combining this with~\eqref{eq:bounds} we get  $\gi(G) =n - \delta$.

For the converse, let $G$ be a graph on $n$ vertices, where $n \geq 2 \delta + 2$, such that $\gi(G)=n - \delta$. Let the indicated domination game be played on $G$. If after the first selection of Staller more than $\delta+1$ vertices were dominated, then at most $n-\delta-1$ vertices will be chosen by Staller when the indicated domination game ends on $G$,  a contradiction. Therefore, Staller needs to select a vertex of degree $\delta$ in her first move. For the same reason, since $\gi(G)=n-\delta$, in all of her subsequent moves Staller needs to select a vertex that dominates only one vertex that was not dominated earlier. Let $x$ be the vertex selected in the first move of Staller, and let $N_G(x)=\{u_1,\ldots, u_\delta\}$. Let $R=V(G)-N_G[x]$. If there existed a vertex $v\in R$ such that $v$ had no neighbors in $N_G(x)$, then Dominator's next move would be to indicate $v$, and after Staller selected a vertex that dominates $v$, at least two new vertices would become dominated, a contradiction. Therefore, every vertex in $R$ has at least one neighbor in $N_G(x)$.

Now, suppose that there exists a vertex in $R$, say $w_0$, that has a neighbor in $R$, and let $w_1,\ldots,w_k$ be the neighbors of $w_0$ from $R$. For every vertex $w_i$, where $i\in\{0,\ldots, k\}$, there exists a vertex $u_{j_i}\in N_G(x)$ such that $w_i$ is the only  neighbor of $u_{j_i}$ in $R$. Indeed, if for some $w_i$ such  a vertex $u_{j_i}$ would not exist, then Dominator could indicate $w_i$ in his second move, and regardless of which vertex Staller would select in her second move to dominate $w_i$ at least two new vertices would become dominated in that move, which is a contradiction. Since $\deg(w_0)\ge \delta$, we infer that $w_0$ has (at least) $\delta-k$ neighbors in $N_G(x)$. Clearly, all of the neighbors of $w_0$ in $N_G(x)$ are distinct from vertices $u_{j_i}$, where $i\in [k]$, which implies that $w_0$ has exactly $\delta-k$ neighbors in $N_G(x)$. In addition, $k\le \delta -1$, since all vertices $u_{j_i}$, where $i\in \{0,1,\ldots,k\}$, are pairwise distinct.  Therefore, since $n>2\delta+1 \geq |N_G[x]|+k+1$, we infer that there exists a vertex $z\in R-\{w_0,\ldots,w_k\}$. If $z$ has a neighbor in $R$, then, by the same reason as earlier, there exists a vertex $u'$ in $N_G(x)$ such that $z$ is the only neighbor of $u'$ from $R$. However, every vertex in $N_G(x)$ has a neighbor in $\{w_0,\ldots,w_k\}$, therefore $z$ cannot be adjacent to any vertex in $R$. Since $\deg(z)\ge \delta$, we infer $N_G(z)=N_G(x)$, which is again a contradiction, since each $u_{j_i} \in N_G(x)$ has only one neighbor in $R$.
The assumption that there exists an edge between two vertices in $R$ led us to a contradiction, therefore $R\cup\{x\}$ is independent, and $K_{\delta,n-\delta}$ is a spanning subgraph of $G$ with the part of the bipartition of size $n-\delta$ being independent.
\end{proof}

Note that a graph $G$ with minimum degree $\delta$ and order $n$, where $n \geq 2 \delta + 2$ and $\gi(G) = n - \delta$, can equivalently be described as being the join of an independent set of size $n-\delta$ and any graph of order $\delta$.

\section{Graphs with $\gi=\alpha$}
\label{sec:alpha}
In this section, we present several classes of graphs whose indicated domination number equals their independence number. Naturally, due to Proposition~\ref{prop:diagram}, the independence number of any of these graphs is equal to the upper domination number.

Recall that a graph is {\it split} if its vertex set can be partitioned into a clique and an independent set.
A {\it split partition} of a split graph $G$ is a pair $(K,I)$ such that $K$ is a clique, $I$ is an independent set,
$K\cup I = V(G)$ and $K\cap I = \emptyset$.
It was proven in~\cite{bresar-2014} that if $G$ is a split graph with split partition $(K,I)$, then
$$\gamma_{gr}(G) =\left\{
	\begin{array}{ll}
		\alpha(G), & \hbox{if every two vertices in $K$ have a common neighbor in $I$;} \\
		\alpha(G)+1, & \hbox{otherwise.}
	\end{array}
	\right.$$
We next show that the second line in the above equation never appears if $\ggr$ is replaced with $\gi$.

\begin{proposition}
	\label{prop:split}
	If $G$ is a split graph, then $\gi(G) = \alpha(G)$.
\end{proposition}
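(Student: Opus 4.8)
The inequality $\gi(G) \ge \alpha(G)$ already follows from Proposition~\ref{prop:diagram} (since $\alpha(G) \le \Gamma(G) \le \gi(G)$), so the content is the reverse inequality $\gi(G) \le \alpha(G)$, which I would prove by exhibiting a strategy for Dominator that forces the game to finish after at most $\alpha(G)$ selections. Fix a split partition $(K,I)$ of $G$; without loss of generality take $K$ to be a maximal clique, so that every vertex of $I$ has a non-neighbor in $K$ (otherwise move it into $K$), and note $\alpha(G) \ge |I|$ with equality unless some vertex of $K$ has no neighbor in $I$, in which case that vertex is independent from all of $I$ and contributes to a larger independent set.

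**Dominator's strategy.** The guiding idea is that Dominator should always indicate an undominated vertex of $I$ when one exists. If Dominator indicates $v \in I$, Staller selects some $w \in N[v]$; either $w = v \in I$, or $w \in K$. In the latter case $w$ dominates all of $K$ at once. So I would split the analysis: as long as there is an undominated vertex of $I$, Dominator indicates one; once all of $I$ is dominated, only vertices of $K$ may remain undominated, and since $K$ is a clique, at that point Dominator needs at most one more move (indicate any undominated vertex of $K$; Staller's reply dominates all of $K$). The key counting step is to show that the number of moves spent while $I$ still has undominated vertices is at most $|I|$ in the worst case, and that if Staller ever "wastes" a move by selecting into $K$ (thereby dominating all of $K$), the bookkeeping still closes — that move dominates at least one vertex of $I$ (namely the indicated $v$, if $v\in I$... wait, $v\in I$ was indicated, $w\in K\cap N(v)$ dominates $v$), so each such move still reduces the set of undominated vertices of $I$ by at least one. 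Hence after at most $|I|$ moves all of $I$ is dominated, plus possibly one cleanup move for $K$: total at most $|I| + 1$. To shave off the $+1$, I would argue that either $\alpha(G) \ge |I| + 1$ (when some vertex of $K$ has no neighbor in $I$, so the cleanup move is genuinely needed but we have slack), or every vertex of $K$ has a neighbor in $I$, in which case the very first time Staller is forced to dominate a vertex of $I$, as soon as some such selection lands in $K$ all of $K$ is done and no cleanup is needed; and if Staller always selects within $I$, then after $|I|$ moves $I\subseteq D$, but then $D\cap K=\emptyset$ and $K$ might be undominated — so in fact Dominator should, on his first move, indicate a vertex of $K$ rather than of $I$ when every vertex of $K$ has an $I$-neighbor, forcing Staller's first selection to dominate all of $K$ immediately (Staller picks some $u\in N[v]$ for $v\in K$; if $u\in K$ all of $K$ done; if $u\in I$ then $u$ is a neighbor of $v\in K$, and... $u$ dominates only the part of $K$ adjacent to $u$, not all of it). I need to be careful here: indicating a $K$-vertex does not guarantee $K$ gets fully dominated unless Staller picks inside $K$.

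**Refined cleanup argument.** The cleaner route, which I expect to work: Dominator plays to dominate $I$ first as above, using at most $|I|$ moves, and tracks which vertices of $K$ have been selected into $D$ along the way. If at the end of this phase $K$ is already dominated (which happens iff some selection landed in $K$, or every vertex of $K$ has a neighbor among the selected vertices of $I$), we are done with at most $|I| \le \alpha(G)$ moves. Otherwise $K$ is undominated, meaning every selection so far was inside $I$ and moreover some vertex $k^* \in K$ has no neighbor among the $|I|$ selected vertices — but the selected set is all of $I$, so $k^*$ has no neighbor in $I$ at all, whence $I \cup \{k^*\}$ is independent and $\alpha(G) \ge |I| + 1$; now one cleanup move brings the total to $|I| + 1 \le \alpha(G)$.

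**Main obstacle.** The delicate point — and the step I would be most careful with — is verifying that Dominator can actually always indicate an *undominated* vertex of $I$ during the first phase, i.e.\ that the phase is well-defined and terminates in at most $|I|$ moves with each move strictly shrinking the undominated part of $I$. The subtlety is that a Staller selection $w \in K$ answering an indicated $v \in I$ dominates $v$ and possibly other vertices of $I$ adjacent to $w$; this only helps (removes $\ge 1$ from undominated-$I$). A Staller selection $w = v \in I$ removes exactly $v$ (plus any $I$-neighbors of $v$, but $I$ is independent, so exactly $v$, unless... $v$ could already have been dominated — no, Dominator only indicates undominated vertices). So each move of the phase removes at least the indicated vertex from the undominated part of $I$, giving at most $|I|$ moves; combined with the cleanup analysis this yields $\gi(G) \le \alpha(G)$, completing the proof together with Proposition~\ref{prop:diagram}.
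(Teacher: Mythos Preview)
Your argument is correct and follows the same core strategy as the paper: Dominator always indicates an undominated vertex of $I$, and each such move dominates at least that vertex, so at most $|I|$ moves are needed to dominate $I$. The paper streamlines the endgame by choosing the split partition with $I$ a \emph{maximum} independent set from the outset, which forces every vertex of $K$ to have a neighbor in $I$; then once $I$ is dominated, $K$ is automatically dominated (either some selection landed in $K$, or $D=I$ and every $k\in K$ has a neighbor in $D$), and your cleanup case never arises. Your version works for an arbitrary split partition at the cost of the extra case analysis, which is fine but unnecessary.
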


\begin{proof}
	Let $(K, I)$ be the split partition of $G$. We can assume that $I$ is a maximum independent set, and thus every vertex in $K$ has a neighbor in $I$. Dominator's strategy
in each of his moves is to indicate a vertex $x$ from $I$ that has not been dominated by the vertices previously chosen by Staller.  After all vertices from $I$ have been dominated by Staller's selected vertices, the vertices in $K$ have also been dominated since $K$ is a clique and each vertex from $K$ has a neighbor in $I$.  Hence at most $|I| = \alpha(G)$ vertices were chosen by Staller, and we infer that $\gi(G) \leq \alpha(G)$.  The reverse inequality follows from Proposition~\ref{prop:diagram}.
\end{proof}

To see that trees enjoy the equality considered in this section is a bit less straightforward.

\begin{theorem}
	\label{thm:trees}
	If $T$ is a tree, then $\gi(T) = \alpha(T)$.
\end{theorem}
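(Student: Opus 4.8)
Since the inequality $\gi(T)\ge\alpha(T)=\Gamma(T)$ is already supplied by Proposition~\ref{prop:diagram} (and Theorem~\ref{thm:topthreebipartite}), the whole task is to exhibit a strategy for Dominator that finishes the indicated domination game on a tree $T$ after at most $\alpha(T)$ selections by Staller. The natural plan is to fix in advance a maximum independent set $A$ with $|A|=\alpha(T)$ and then let Dominator always indicate a not-yet-dominated vertex chosen from $A$; if he can always do this until the game ends, then Staller makes at most $|A|$ selections (each selection dominates at least one new vertex of $A$), and we are done. The difficulty is that Staller is free to select a vertex \emph{outside} $A$ in the closed neighborhood of the indicated vertex, so one must argue that even so, once every vertex of $A$ has been dominated, the whole tree has been dominated --- equivalently, that $A$ can be chosen to be a dominating set. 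This is false for general maximum independent sets (a leaf's neighbor may be undominated), so the real content is to pick $A$ cleverly, and to let Dominator's choice of \emph{which} undominated vertex of $A$ to indicate depend on the play.

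First I would root $T$ at a leaf and process it bottom-up, or alternatively use the standard fact that a tree has a maximum independent set $A$ that contains all leaves; more usefully, I would look for a maximum independent set $A$ together with a system of distinct representatives, i.e.\ a matching saturating $V(T)\setminus A$ into $A$ (this exists because in a König graph $\alpha+\text{(matching number)}=n$ forces every minimum vertex cover $V\setminus A$ to be matchable into $A$ when $A$ is maximum). The plan for Dominator is then: maintain the invariant that the undominated vertices of $A$ still "cover" (via that matching, or via adjacency in $T$) all the currently undominated vertices outside $A$; at each move indicate an undominated $a\in A$ whose subtree still contains an undominated vertex of $V\setminus A$ if one exists, preferring to indicate a vertex of $A$ all of whose $A$-neighbours of its outside-neighbours are already handled --- essentially a leaf-stripping order on the bipartite-like structure between $A$ and $V\setminus A$. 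Each Staller response dominates the indicated $a$ plus possibly some vertices outside $A$; I want to show no undominated vertex of $V\setminus A$ can survive after all of $A$ is dominated, which is where the tree (acyclicity) structure must be used: in a tree, a vertex $u\notin A$ has all its neighbours in... not necessarily $A$, so I'd instead argue that every $u\notin A$ has a neighbour in $A$ (true if $A$ is a maximal, hence maximum-maximal, independent set chosen to be dominating), so once all of $A$ is dominated by Staller's selections — wait, domination of $A$ does not dominate $u$. Hence the correct route: choose $A$ so that $A$ is also a dominating set, prove such a maximum independent set exists in any tree (by induction: remove a leaf $\ell$ and its support vertex $s$ region, or use that trees have a maximum independent set that is dominating — indeed every maximal independent set is dominating, and a tree has a \emph{maximum} independent set that is also maximal, trivially, so $A$ maximum $\Rightarrow$ $A$ maximal $\Rightarrow$ $A$ dominating). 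Then Dominator indicating undominated vertices of $A$ suffices, since once every vertex of $A$ is dominated (by some selected vertex), and $A$ dominates $V(T)$... no: $A$ dominating means every vertex is adjacent to $A$, not that dominating $A$ dominates $V$.

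So the honest main obstacle, and the step I expect to require the real work, is exactly this: after Staller has been forced to dominate every vertex of $A$, some vertex $u\notin A$ might still be undominated, because the vertex Staller selected to dominate $a\in A$ need not be $u$ even if $u\sim a$. To handle it, Dominator must be allowed to indicate vertices \emph{not} in $A$ too, but then the counting $\le\alpha(T)$ is threatened. The resolution I would pursue is a weighting/charging argument on a rooted tree: root $T$, and have Dominator always indicate the undominated vertex that is \emph{deepest} (or: in a leaf-most not-yet-dominated subtree), among candidates chosen so that each of Staller's selections can be charged to a distinct vertex of a fixed maximum independent set; acyclicity guarantees that when Dominator indicates the deepest undominated vertex $v$, Staller's selection lies in $N[v]$, all of whose members except possibly $v$'s parent lie in the already-dominated deeper part, so each selection "uses up" at least one vertex that can be matched into $A$ injectively. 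Making this charging precise — specifying the order of indication and the injection into $A$, and verifying the invariant is preserved no matter which neighbour Staller picks — is the crux; everything else (the lower bound, the existence of $A$) is routine given the earlier results.
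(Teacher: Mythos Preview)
Your proposal is an honest exploration rather than a proof: you correctly isolate the obstruction (dominating every vertex of a maximum independent set $A$ need not dominate $V(T)$), but the resolution you sketch does not work. Concretely, take $P_4$ with vertices $1$--$2$--$3$--$4$ rooted at $1$. Your ``indicate the deepest undominated vertex'' rule has Dominator indicate $4$; Staller selects $4$, leaving $\{1,2\}$ undominated. The deepest undominated vertex is now $2$; Staller selects $3$, leaving $\{1\}$ undominated. A third move is required, yet $\alpha(P_4)=2$. So the bottom-up order fails outright, and no charging argument can rescue a strategy that genuinely uses $3$ moves on $P_4$.

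The paper's approach is close to the idea you touched on and then abandoned: instead of a maximum independent set, fix a \emph{minimum edge cover} $F$ (so $|F|=\rho(T)=\alpha(T)$ since $T$ is bipartite), root $T$ at a leaf $r$, and call an edge of $F$ \emph{saturated} once both its endpoints are dominated. Dominator first indicates $r$, saturating the unique edge of $F$ at $r$. Thereafter he looks at the undominated vertex $u$ \emph{closest to the root} and the edge $uv\in F$ covering it, and indicates either $u$ or $v$ according to a short case analysis (whether $v$ is the parent or a child of $u$, and whether $v$ is already dominated). The invariant one checks is that each selection by Staller saturates a new edge of $F$, which immediately bounds the number of moves by $|F|=\alpha(T)$. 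The key point you were missing is that working top-down from a leaf root, together with tracking saturated edges of an edge cover rather than dominated vertices of an independent set, gives you the bookkeeping object of exactly the right size and lets the tree's acyclicity control the case analysis.
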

\begin{proof}
Let $F$ be a minimum edge cover of the tree $T$. Since $T$ is bipartite, $|F|=\rho(G)=\alpha(T)$. Choose a leaf $r$ of $T$ and consider $T$ as a rooted tree with $r$ as its root, by which the notions of parent and child can be used in $T$. Let an indicated domination game be played in $T$. We will present a strategy of Dominator, which ensures that at most $\alpha(T)$ vertices will be selected during the game.

In the first move, Dominator indicates vertex $r$, which forces Staller to dominate both $r$ and its child $s$. Note that $rs\in F$, and that after the first move of Staller both endvertices of this edge from $F$ become dominated. We will say that an edge $f\in F$ is {\em saturated} if both of its endvertices are dominated. We claim that Dominator can ensure that  after every move of Staller, a new edge from $F$ becomes saturated. In other words, after the $i^{th}$ move of Staller, Dominator can ensure that at least $i$ edges from $F$ are saturated. This is clearly true after the first move of Staller. In the subsequent moves, as long as there are still some undominated vertices left, Dominator considers the vertex of $T$ which is the closest to $r$ among all undominated vertices. Let this vertex be denoted by $u$, and let $uv\in F$ for a neighbor $v$ of $u$. Now, we consider several possibilities that need to be reflected in Dominator's strategy. With this, Dominator will achieve that a new edge of $F$ becomes saturated after every move of Staller, and, in addition, that the set of dominated vertices induces a connected subgraph of $T$ (there is one exception case to this property, in which case Dominator can achieve that after the subsequent move of Staller the set of dominated vertices again induces a connected subgraph of $T$).

If $v$ is the parent of $u$, then it is clear, by how $u$ is defined, that $v$ has been dominated in some of the previous moves. In this case, the strategy of Dominator is to indicate $u$. In this way, $u$ will become dominated after the following move of Staller, and so the edge $uv$ becomes saturated, by which the number of saturated edges from $F$ increases by one. Additionally, the set of dominated vertices still induces a connected subgraph. Next, assume that $u$ is the parent of $v$. It may happen that $v$ is already dominated. (This is the case when the set of dominated vertices does not induce a tree, and how this happens will be explained in the next case.) If this is indeed so, then Dominator indicates $u$, and in the following move of Staller, $u$ will become dominated and $uv\in F$ will thus become saturated. The remaining possibility is that $v$ is not dominated. By the strategy of Dominator, we can argue that all children of $v$ are undominated at this point. Dominator's strategy in this case is to indicate $v$. If $v$ or $u$ is selected by Staller in the next move, then $uv\in F$ becomes saturated and the set of dominated vertices is connected, as desired.
Otherwise, a child $x$ of $v$ is selected by Staller, and let $xy$ be the edge of $F$, which covers $x$. Note that either $y=v$ or $y$ is a child of $x$. In either case, the edge $xy\in F$ becomes saturated. In this case (that is, when a child $x$ of $v$ is selected by Staller), we are in the situation when the set of dominated vertices is not connected, but, as explained earlier,  Dominator's next move is to indicate $u$. In this way, $u$ becomes dominated, and since $v$ was dominated in the preceding move, the edge $uv\in F$ becomes saturated. Thus, after these two consecutive moves, the set of vertices in $D$ that are dominated induces a connected subgraph again.

Since after each move of Staller, the number of saturated edges from $F$ increased, we infer that at most $|F|=\alpha(T)$ vertices are chosen by Staller when the game
ends. Since $\gi(T)\ge \alpha(T)$ by Proposition~\ref{prop:diagram}, the stated equality is proved.
\end{proof}


\begin{corollary}
	\label{cor:paths}
	If $n \geq 1$, then $\gi(P_n) = \left \lceil \frac{n}{2} \right \rceil.$
\end{corollary}

In the next result, we prove for yet another family of graphs, namely the grids, that their indicated domination number equals the independence number.

\begin{theorem}
	If $m, n \geq 1$, then $\gi(P_m \cp P_n) = \alpha(P_m \cp P_n) = \left \lceil \frac{mn}{2} \right \rceil$.
\end{theorem}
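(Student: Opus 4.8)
The plan is to establish the two equalities separately: first $\alpha(P_m \cp P_n) = \lceil mn/2 \rceil$, and then $\gi(P_m \cp P_n) = \alpha(P_m \cp P_n)$. The first equality is classical and easy: the grid $P_m \cp P_n$ is bipartite with the standard checkerboard 2-coloring, the larger color class has size $\lceil mn/2\rceil$, and it is independent, so $\alpha \ge \lceil mn/2\rceil$; conversely any independent set can contain at most half (rounded up) of the vertices since the grid has a perfect or near-perfect matching (pair up vertices along rows), giving $\alpha \le \lceil mn/2 \rceil$. Since grids are bipartite, by Proposition~\ref{prop:diagram} together with Theorem~\ref{thm:topthreebipartite} we already have $\gi(P_m\cp P_n) \ge \Gamma(P_m\cp P_n) = \alpha(P_m\cp P_n)$, so the entire content of the theorem is the upper bound $\gi(P_m\cp P_n) \le \lceil mn/2\rceil$, i.e. exhibiting a strategy for Dominator that forces Staller to select at most $\lceil mn/2\rceil$ vertices.

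For the Dominator strategy I would mimic the edge-cover/saturation idea used in the proof of Theorem~\ref{thm:trees}. Fix a near-perfect matching $M$ of $P_m \cp P_n$ (a perfect matching if $mn$ is even, or a near-perfect matching missing exactly one vertex if $mn$ is odd); for concreteness, in each row pair consecutive vertices, and if a row has odd length, handle the leftover vertex by one extra vertical edge, leaving at most one vertex of the whole grid unmatched. Note $|M| = \lfloor mn/2\rfloor$, and the number of "units" to be saturated — matched edges plus the possible lone vertex — is exactly $\lceil mn/2\rceil$. Dominator's invariant will be: after the $i$-th move of Staller, at least $i$ of these units are \emph{saturated}, where a matched edge is saturated if both endpoints are dominated, and the lone vertex is saturated once it is dominated. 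To maintain the invariant, on each turn Dominator picks an undominated vertex $u$, lets $e = uv \in M$ be its matching edge (or, if $u$ is the unmatched vertex, just indicate $u$ itself, which saturates its unit). The point is that Staller's forced reply — a vertex in $N[u]$ — dominates $u$, hence either saturates the edge $uv$ directly, or, if Staller instead picks some other neighbor $w$ of $u$, that $w$ lies in (and thereby dominates the partner of) its own matching edge $w w' \in M$, saturating \emph{that} unit; in the latter case a short follow-up move by Dominator (re-indicating $u$, now that $w$ may have dominated it, or otherwise an undominated neighbor) restores the count, exactly as in the tree argument's "exception case."

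The main obstacle will be the bookkeeping in that exceptional case: unlike a tree, the grid has cycles, so when Staller answers with a "wrong" neighbor $w$ of $u$ we must verify that the unit saturated by $w$ was not already counted, and that Dominator can, within one further move, get the count back on track without ever losing ground. The clean way to handle this is to choose the matching $M$ to be \emph{row-aligned} (all matched edges horizontal except for at most one vertical repair edge), and to have Dominator always indicate the undominated vertex $u$ that is lexicographically smallest (topmost row, then leftmost), so that $u$'s "backward" neighbors (up, and left) are already dominated; then the only neighbors Staller can usefully divert to are $u$ itself, its matched partner, or the down/right neighbor, and in each sub-case one checks directly that the number of saturated units strictly increases over the pair of moves, and that no unit is double-counted because $u$ was undominated at the start of the pair. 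Combining the resulting bound $\gi(P_m\cp P_n) \le \lceil mn/2\rceil$ with the lower bound from Proposition~\ref{prop:diagram} and Theorem~\ref{thm:topthreebipartite} yields all three equalities.
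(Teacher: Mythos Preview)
Your lower bound is fine, and the overall plan (exhibit a Dominator strategy guaranteeing at most $\lceil mn/2\rceil$ selections) is the right one. But the specific strategy you propose does \emph{not} maintain the invariant, and the gap is not just bookkeeping.

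Here is the failure mode. With the row-aligned matching $M$ and the rule ``indicate the lexicographically smallest undominated vertex $u$'', suppose $u=(i,j)$ with $j$ odd, so the partner $v=(i,j+1)$ lies to the right and may be undominated. The up and left neighbors of $u$ are dominated, as you note --- but nothing stops Staller from \emph{selecting} one of them. If Staller selects the up neighbor $(i-1,j)$, the only newly dominated vertex can be $u$ itself; the matching edge through $(i-1,j)$ was already saturated (both its endpoints lie in row $i-1$, fully dominated), and the edge $uv$ is still unsaturated because $v$ remains undominated. So no new unit is saturated by this move. A concrete run in $P_4\cp P_6$: let Staller answer the first six indications so that after six moves rows~1 and~2 are fully dominated, exactly six edges are saturated, and row~3 is completely untouched. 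Dominator now indicates $(3,1)$; Staller selects $(2,1)$, newly dominating only $(3,1)$ and saturating nothing. On the next move Dominator indicates $(3,2)$ and recovers one edge, but then at $(3,3)$ Staller plays $(2,3)$ and the deficit reappears. The shortfall compounds row by row; your ``follow-up move'' cannot catch up.

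The tree argument you are mimicking avoids this by sometimes indicating the \emph{partner} $v$ rather than $u$, and it relies essentially on the fact that in a tree any child of $v$ has its own matching edge going further down --- so a ``diverted'' Staller response still saturates a fresh edge. In a grid that structural guarantee is gone: a neighbor of $v$ in the next row has a horizontal partner that need not be dominated. The paper therefore takes a genuinely different route: it partitions $V(P_m\cp P_n)$ into pairs of adjacent columns $C_i$, works with ``empty pairs'' $\{(x_{2i-1},y_j),(x_{2i},y_j)\}$ rather than matching edges, and runs a two-phase strategy (first make every pair non-empty by indicating only in the even column of $C_i$; then mop up the at most $n-\ell_i$ leftovers per $C_i$). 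The accounting there is against column-pairs, not against a global matching, which is what makes the counting close.
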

\begin{proof}
Let $V(P_m)=\{x_1,\ldots, x_m\}$ and $V(P_n)=\{y_1,\ldots,y_n\}$. We divide the proof into two cases, depending on whether both $m$ and $n$ are odd or not.

First, assume that at least one of $m$ and $n$ is an even integer, and, by symmetry, we may assume with no loss of generality that $m$ is even. Consider the partition of $V(P_m \cp P_n)$ into pairs of adjacent columns. Namely, let $C_i=\{x_{2i-1},x_{2i}\}\times V(P_n)$, for all $i\in [m/2]$. The strategy of Dominator, to achieve his goal that at most $mn/2$ vertices will be selected during the indicated domination game on $P_m \cp P_n$, is divided into two phases. In the first phase, he deals with each of the sets $C_i$, proceeding in their natural order, while in the process some of the vertices of $C_i$, for all $i\in [m/2]$, may be left undominated. In the second phase, he indicates the vertices that were left undominated in the first phase one by one.

Let us first present the strategy of Dominator in $C_1$. Whenever Dominator indicates a vertex of $C_1$, that vertex is in the column $\{x_2\}\times V(P_n)$. Dominator can start by indicating an arbitrary vertex of that column, say $(x_2,y_1)$, and then proceeds by indicating vertices of that column by obeying only one rule, which we explain next.
At a given point in the game we say that the pair $\{(x_{2i-1},y_j),(x_{2i},y_j)\}$, where $i\in [m/2]$ and $j\in [n]$, is {\em empty} if neither vertex in the pair  has been dominated up to that point in the game. The strategy of Dominator while dealing with the set $C_1$ in the first phase is to find an arbitrary empty pair $\{(x_1,y_j),(x_2,y_j)\}$, if it exists, and indicate $(x_2,y_j)$, as long as there exists an empty pair in $C_1$. Thus, in her next move, Staller has to dominate $(x_2,y_j)$. She can do this by choosing either $(x_2,y_j)$ or one of its neighbors. In either case, the pair $\{(x_1,y_j),(x_2,y_j)\}$ is no longer empty after her move. Clearly, if the move of Staller is to choose $(x_1,y_j),(x_2,y_{j-1}), (x_2,y_j)$ or $(x_2,y_{j+1})$, then it may happen that additional pairs of $C_1$ become non-empty. In this case, regardless of which of the vertices $(x_1,y_j),(x_2,y_{j-1}), (x_2,y_j)$ or $(x_2,y_{j+1})$ is chosen by Staller, the chosen vertex is the first vertex of its corresponding pair that was chosen during the game, and since its neighboring pairs have become non-empty (if they were not already non-empty before that move), the other vertex of the pair in which the chosen vertex lies will never be selected by Staller during the first phase of the game. It is also possible, that Staller chooses $(x_3,y_j)$ to dominate $(x_2,y_j)$. In this case, $(x_2,y_j)$ becomes dominated and the pair $\{(x_1,y_j),(x_2,y_j)\}$ becomes non-empty, yet the move of Staller choosing $(x_3,y_j)$ will be considered when dealing with $C_2$.
The first step of the game in the first phase, dealing with $C_1$, ends when all pairs of $C_1$ have become non-empty. Note that by the strategy of Dominator, in each pair $\{(x_1,y_j),(x_2,y_j)\}$ at most one of the vertices has been chosen by Staller, and since all pairs in $C_1$ have become non-empty, at most one of the vertices in each pair remains undominated. Clearly, if a vertex from a pair was chosen, then in the corresponding pair both vertices have been dominated. We summarize these observations as follows: after dealing with $C_1$ in the first phase, $\ell_1$ vertices of $C_1$ have been chosen, and at most $n-\ell_1$ vertices from $C_1$ have been left undominated.

Dominator proceeds by dealing with $C_2$, and the only difference from the initial case when dealing with $C_1$ is that some vertices $(x_3,y_j)$ may have already been chosen by Staller. More generally, when Dominator starts to deal with $C_i$, where $2\le i\le m/2$, some vertices  $(x_{2i-1},y_j)$ may have already been chosen by Staller while dealing with $C_{i-1}$. This also implies that the corresponding pairs $\{(x_{2i-1},y_j),(x_{2i},y_j)\}$, and their eventual neighboring pairs, $\{(x_{2i-1},y_{j-1}),(x_{2i},y_{j-1})\}$ and $\{(x_{2i-1},y_{j+1}),(x_{2i},y_{j+1})\}$, are already non-empty when Dominator starts to deal with $C_i$. The strategy of Dominator is the same as when dealing with $C_1$. Notably, Dominator finds an arbitrary empty pair $\{(x_{2i-1},y_j),(x_{2i},y_j)\}$, if it exists, and indicates $(x_{2i},y_j)$, as long as there exists an empty pair in $C_i$. By using the same arguments as in the previous paragraph, Dominator can ensure that after dealing with $C_i$ in the first phase, $\ell_i$ vertices of $C_i$ have been chosen, and at most $n-\ell_i$ vertices from $C_i$ have been left undominated. The first phase is over after Dominator deals with $C_{m/2}$. By that time, Staller has chosen $\sum_{i=1}^{m/2}{\ell_i}$ vertices, while at most  $$\sum_{i=1}^{m/2}{(n-\ell_i)}=\frac{mn}{2}-\sum_{i=1}^{m/2}{\ell_i}$$ vertices of $P_m\cp P_n$  have been left undominated. Now, the second phase begins, in which Dominator indicates  the remaining undominated vertices one by one, in any order. Hence, Staller will additionally select at most $\frac{mn}{2}-\sum_{i=1}^{m/2}{\ell_i}$ vertices, which together with vertices selected in the first phase contributes to at most $\frac{mn}{2}$ chosen vertices during the entire game. Hence, $\gi(P_m\cp P_n)\le \frac{mn}{2}=\alpha(P_m\cp P_n)$, which combined with Proposition~\ref{prop:diagram} implies the stated result.

Second, consider the case when both $m$ and $n$ are odd. In this case, partition $V(P_m\cp P_n)$ into $\frac{m+1}{2}$ sets, namely $C_0=\{x_1\}\times V(P_n)$ and $C_i=\{x_{2i},x_{2i+1}\}\times V(P_n)$, where $1\le i\le \frac{m-1}{2}$. The proof is similar to the proof of the previous case in which $m$ was even; the only difference is in the first step, where Dominator deals with the first column $C_0$, which we explain next.

Dominator uses the following strategy when dealing with $C_0$, which lasts as long as there is an undominated vertex left in $C_0$. The strategy consists of two phases. In the first phase, Dominator starts by indicating $(x_1,y_1)$, and then, in every further step, he indicates the vertex $(x_1,y_{i+2})$ if $i$ is the largest index such that $(x_1,y_i)$ is dominated and $i\le n-2$. 
By the choices of Staller, a vertex of $C_0$ becomes dominated by itself or one of its neighbors (possibly it is dominated by a neighbor from $C_1$, which will then be considered when dealing with $C_1$). If Staller chose $(x_1,y_{i+3})$ in one of her moves, then after the first phase is over, vertex $(x_1,y_{i+1})$ remains undominated. Now, note that vertices chosen by Staller together with vertices that are not yet dominated after the first phase form an independent set of the path induced by the first column. In the second phase, Dominator selects vertices of $C_0$ that have not been dominated in the first phase (if any) one by one, and in this way all vertices of $C_0$ become dominated. By the previous observation, Staller has chosen at most $\alpha(P_n)=\frac{n+1}{2}$ vertices in $C_0$.

The rest of the strategy of Dominator is exactly the same as in the previous case.  Hence we conclude that Dominator can ensure that at most $\frac{(m-1)n}{2}$ vertices will be selected to dominate the vertices in $C_1\cup \cdots \cup C_{\frac{m-1}{2}}$. Together with the bound on the number of vertices from $C_0$ selected in the first step of the game, we infer that at most $$\frac{n+1}{2}+\frac{(m-1)n}{2}=\frac{mn+1}{2}=\left\lceil \frac{mn}{2}\right \rceil$$ vertices will be selected during the game on $P_m\cp P_n$. We readily infer that $\gi(P_m\cp P_n)=\left\lceil \frac{mn}{2}\right \rceil$.	
\end{proof}

The next family of graphs $G$ for which we suspect that $\gi(G)=\alpha(G)$ are cubic bipartite graphs. We checked by computer that the equality holds for all (cubic) bipartite graphs $G$ with $n(G)\le 10$. Next, we extend this result to $n(G)\le 12$.

\begin{proposition} \label{prop:bipartite1012}
If $G$ is a connected, cubic bipartite graph of order at most $12$, then $\gi(G)=\frac{n(G)}{2}=\alpha(G)$.
\end{proposition}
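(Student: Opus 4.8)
Since $G$ is bipartite, Theorem~\ref{thm:topthreebipartite} gives $\Gamma(G)=\alpha(G)$, and since a cubic graph is regular bipartite it has a perfect matching; that matching is an edge cover of size $n(G)/2$, and no edge cover can be smaller, so $\alpha(G)=\rho(G)=n(G)/2$. Combined with Proposition~\ref{prop:diagram} this already yields the lower bound $\gi(G)\ge\Gamma(G)=n(G)/2$ with no further work, so the whole statement reduces to the upper bound $\gi(G)\le n(G)/2$, i.e.\ to producing a strategy for Dominator that ends the game after at most $n(G)/2$ selections of Staller.

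The plan is to reduce this upper bound to a finite check. A connected cubic bipartite graph has even order with both parts of size $n(G)/2$, so only the orders $n(G)\in\{6,8,10,12\}$ occur, and up to isomorphism there are only a handful of such graphs: $K_{3,3}$ for $n=6$, the cube $Q_3$ (equivalently $K_{4,4}$ minus a perfect matching) for $n=8$, and the few connected cubic bipartite graphs on $10$ and $12$ vertices. For each of them I would either exhibit an explicit Dominator strategy attaining $n(G)/2$, or equivalently run an exhaustive search of the finite game tree: the game lasts at most $n(G)/2$ rounds, in each round Dominator has at most $|V(G)|$ options and Staller at most $4$, and a position is completely determined by the set of already-dominated vertices, so a memoized backward induction over the at most $2^{\,n(G)}$ positions computes $\gi(G)$ exactly. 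This is the route I would actually take to certify the $n(G)=12$ cases, extending the computation already done for $n(G)\le 10$.

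For the small, highly symmetric graphs one can also argue by hand. On $K_{3,3}$ Dominator indicates an arbitrary vertex; after Staller's reply at most two vertices remain undominated and they lie on opposite sides, so two more indicated vertices finish the game in three rounds. On $Q_3$, writing $V(Q_3)=A\cup B$ with $a_i\sim b_j$ iff $i\ne j$, Dominator indicates $a_1$ and thereafter always indicates an undominated vertex whose closed neighborhood meets the largest number of undominated vertices; a short case analysis (three of the four undominated vertices always lie on one side) keeps the count at four. The optimistic hope is that a similarly localized rule, phrased via a $1$-factorization $E(G)=M_1\cup M_2\cup M_3$ or a $2$-factor of the cubic bipartite graph and modeled on the edge-cover bookkeeping of Theorem~\ref{thm:trees}, works for every connected cubic bipartite graph; that would simultaneously settle the $n=12$ cases and support the general conjecture $\gi(G)=n(G)/2$.

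The hard part will be the verification for the $12$-vertex graphs. They are no longer vertex-transitive, Staller has genuine freedom, and --- unlike in a tree --- indicating a vertex $u$ does \emph{not} force Staller to dominate the matching-partner of $u$, so the ``one newly saturated edge per Staller move'' accounting that drives the tree proof breaks down: any uniform strategy must allow occasional two-move corrections while still never exceeding $n(G)/2$ selections overall, and making that airtight on each of the $12$-vertex graphs is delicate. If a human-readable uniform argument resists, the fallback is the exhaustive game-tree search described above, which is entirely feasible since the state space has size at most $2^{12}$.
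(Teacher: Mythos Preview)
Your lower bound via $\gi(G)\ge\Gamma(G)=\alpha(G)=n(G)/2$ is correct and matches the paper. The gap is the upper bound for $n(G)=12$: you describe a computational fallback and express hope for a uniform strategy, but you supply neither. Since the paper already records that the cases $n(G)\le 10$ were verified by computer, the entire new content of the proposition is precisely the $n=12$ case, and your proposal leaves that case unproved---``I would run the game tree'' is a plan, not a proof. (A small slip in passing: in your $K_{3,3}$ sketch the two undominated vertices after the first selection lie on the \emph{same} side, not opposite sides, though your count of three rounds is still fine.)

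The paper in fact gives a short uniform hand argument that works for every connected cubic bipartite graph on $12$ vertices simultaneously, without enumerating them. With parts $A=\{a_1,\dots,a_6\}$ and $B=\{b_1,\dots,b_6\}$ and, say, $s_1=a_1$ with $N(a_1)=\{b_1,b_2,b_3\}$, Dominator indicates $b_4$. If Staller takes $b_4$ then eight vertices are already dominated and four more rounds suffice; otherwise she takes some $a_2\in N(b_4)$, and a brief case split on how many of $b_5,b_6$ lie in $N(a_2)$ either finishes or forces $N(a_2)\cap\{b_4,b_5,b_6\}=\{b_4\}$. Dominator then indicates $b_5$ and finally $b_6$, and the same counting shows $|N[S_4]|\ge 10$ in every branch, hence at most six selections in total. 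The key idea you were missing is that ``indicate in turn the three $B$-vertices not adjacent to $s_1$'' is a single strategy that handles all $12$-vertex graphs at once, and its verification is a one-paragraph case analysis rather than a per-graph computation.
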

\begin{proof}
By the observations preceding the proposition, it remains to consider the case when $n(G)=12$.
Let $A=\{a_1,\ldots,a_6\}$ and $B=\{b_1,\ldots,b_6\}$ be the partite sets of $G$. For $j \ge 1$, we let $s_j$ denote the vertex chosen by Staller on her $j^{th}$ move
and let $S_j=\{s_1,\ldots,s_j\}$.  In addition,  $d_i$ will denote the vertex indicated by Dominator in his $i^{th}$ move, for $i \ge 2$.  We provide a strategy for
Dominator that  will ensure at most $6$ vertices are chosen by Staller.  Without loss of generality $s_1=a_1$
and $N(a_1)=\{b_1,b_2,b_3\}$.  Dominator indicates $d_2=b_4$.  If $s_2=b_4$, then $|N[S_2]|=8$, which leaves four undominated vertices. At most four more vertices will be
chosen by Staller.  Otherwise, $s_2 \in N(b_4) \cap (A-\{a_1\})$.  Reindexing if necessary we assume $s_2=a_2$.  If $\{b_5,b_6\} \subseteq N(a_2)$, then $S_2$ dominates
eight vertices and thus at most six vertices will be chosen by Staller.  If $a_2$ is adjacent to only one, say $b_5$, of $b_5$ or $b_6$, then Dominator points to $b_6$.
Now, regardless of which vertex Staller chooses from $\{b_6,a_3,a_4,a_5,a_6\}$ it is easy to see that $|N[S_3]|\ge 9$, which implies that Staller will choose at
most $6$ vertices.  Therefore, we may assume that $N(s_2) \cap \{b_4,b_5,b_6\} = \{b_4\}$, and this implies that $|N(a_2) \cap \{b_1,b_2,b_3\}|=2$.
Note that we now have $(N(b_5) \cup N(b_6)) \subseteq \{a_3,a_4,a_5,a_6\}$.  Dominator then points to $d_3=b_5$. If $s_3=b_5$, then $|N[S_3]|=10$ which implies that
Staller can choose at most five vertices when the game has ended.  Thus, we may assume that Staller chooses $s_3 \in N(b_5) \cap \{a_3,a_4,a_5,a_6\}$.  By reindexing if necessary
we assume that $s_3=a_3$.  Similar to the above argument, if $a_3b_6 \in E(G)$, then Staller can choose at most six vertices when the game has ended.  Hence, we may
assume that $b_6 \notin N(a_3)$.  Dominator now indicates $d_4=b_6$.  Staller must choose $s_4 \in \{b_6,a_4,a_5,a_6\}$, and it follows that $|V(G) - N[S_4]| \le 2$. Therefore,
$\gi(G) \le 6$.  The reverse inequality follows by Proposition~\ref{prop:diagram}.
\end{proof}

We could not extend the above reasoning  to graphs of larger order. Thus it remains  open whether $\gi(G)=\frac{n(G)}{2}$ holds for all cubic bipartite graph $G$.

\section{Graphs with $\gi$ larger than $\Gamma$}
\label{sec:powers}

In this section, we present a class of graphs whose indicated domination numbers exceed their upper irredundance numbers (and therefore also the independence number) by an arbitrarily large amount.

Recall that the {\em $k^{\rm th}$ power} $G^k$ of a graph $G$ has $V(G^k)=V(G)$, and $uv\in E(G^k)$ if and only if $d_G(u,v)\le k$, where $d_G$ is the standard (shortest paths) distance in $G$.

\bigskip

\begin{theorem}
	\label{thm:powers-plus}
	For every $n \geq k \geq 2$, the $k^\mathrm{th}$ powers $P_n^k$ of paths satisfy
	$$\gi(P_n^k) = \Theta\Big(\frac{\log k}{k} \, n\Big)$$
	as $n\to\infty$.
	More explicitly,
	$$(\left\lceil \log (k+1) \right\rceil + 1)
	\left\lfloor \frac{n}{4k} \right\rfloor \leq \gi(P_n^k) \leq \frac{\left\lceil \log (k+1) \right\rceil + 2}{2k+2} \, n + \left\lceil \log (k+1) \right\rceil + 2$$
	where $\log$ means logarithm of base $2$.
\end{theorem}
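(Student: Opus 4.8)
The plan is to prove the two bounds separately, and to note that they together give the claimed $\Theta\big(\frac{\log k}{k}n\big)$ asymptotics since $\lceil\log(k+1)\rceil+1 = \Theta(\log k)$.

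\textbf{Upper bound (Dominator's strategy).} The key structural fact is that in $P_n^k$ a single selected vertex dominates an interval of up to $2k+1$ consecutive vertices along the path, so a ``cheap'' run of the game would use roughly $n/(2k+1)$ selections; the factor $\lceil\log(k+1)\rceil+2$ is the overhead Staller can force within each such block. I would partition the vertices $v_1,\dots,v_n$ of the underlying path into consecutive blocks of length $2k+2$ (with one short leftover block). Dominator processes the blocks left to right; within the current block $B$ he repeatedly indicates the leftmost still-undominated vertex of $B$. The crucial claim is that each block is finished after at most $\lceil\log(k+1)\rceil+2$ selections by Staller. The reason is a halving argument: when Dominator indicates the leftmost undominated vertex $u$ of $B$, Staller must pick a vertex within distance $k$ of $u$; the ``worst'' she can do is pick a vertex $k$ steps to the right of $u$, which still dominates $u$ but leaves a large undominated gap to its right — however, the gap to the right of the newly dominated interval that remains \emph{inside} $B$ has shrunk, and one checks that the length of the undominated portion of $B$ that lies to the right is at least halved (or the block spills at most a bounded amount into the next block, which is accounted for by the additive constants). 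Summing $\lceil n/(2k+2)\rceil$ blocks times $\lceil\log(k+1)\rceil+2$ selections per block, plus the bounded carry-over, gives $\gi(P_n^k)\le \frac{\lceil\log(k+1)\rceil+2}{2k+2}\,n + \lceil\log(k+1)\rceil+2$. I expect the delicate point here is making the ``halving within a block'' bookkeeping precise, in particular handling the vertices a Staller choice dominates in the \emph{next} block and showing this does not destroy the per-block bound; this is the main obstacle in the upper bound.

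\textbf{Lower bound (Staller's strategy).} Here I would partition the path into $\lfloor n/(4k)\rfloor$ disjoint segments of length $4k$ and have Staller play a ``local greedy'' strategy: whenever Dominator indicates a vertex $u$ lying in (or affecting) a segment $I$, Staller selects the vertex of $N[u]$ that dominates as few new vertices of $I$ as possible — concretely, the endpoint of $N[u]\cap I$ farthest from the already-dominated part, so that she dominates the minimum possible number of fresh vertices while still legally responding. The claim is that in each length-$4k$ segment she can force at least $\lceil\log(k+1)\rceil+1$ of her selections to ``count'' toward that segment. The argument mirrors the upper bound's halving but in reverse: as long as the undominated part of $I$ has length more than $1$, Dominator must indicate something in $I$, and Staller's reply reduces the undominated length by at most a factor close to $1/2$ each time (she dominates at most roughly half of what is left), so it takes $\Omega(\log k)$ rounds to clear a segment of length $\Theta(k)$. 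Choosing the segment length $4k$ (rather than $2k$) gives enough slack that Dominator cannot ``clean up'' two consecutive segments with one selection, so the counts over distinct segments are genuinely additive; summing over the $\lfloor n/(4k)\rfloor$ segments yields $\gi(P_n^k)\ge (\lceil\log(k+1)\rceil+1)\lfloor n/(4k)\rfloor$. The main obstacle on this side is verifying that Staller's local strategy is globally consistent — that responding optimally within one segment never forces a suboptimal (for her) situation in another — which follows because the segments are far enough apart (distance $>k$ between non-adjacent segments' cores) that a single Staller move is relevant to at most one segment's interior.

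\textbf{Assembling the asymptotics.} Finally I would remark that $\lfloor n/(4k)\rfloor = \Theta(n/k)$ and $\lceil\log(k+1)\rceil+1 = \Theta(\log k)$ for $k\ge 2$, so the lower bound is $\Omega\big(\frac{\log k}{k}n\big)$; likewise the upper bound is $\frac{\lceil\log(k+1)\rceil+2}{2k+2}n + O(\log k) = O\big(\frac{\log k}{k}n\big)$ as $n\to\infty$. Hence $\gi(P_n^k)=\Theta\big(\frac{\log k}{k}n\big)$, completing the proof. (The corollary that $\gi$ can exceed $\IR$ by an arbitrary amount then follows by taking, e.g., $k$ fixed and large with $n$ growing, since $\IR(P_n^k)=\alpha(P_n^k)=\Theta(n/k)$ has no $\log k$ factor — this is Corollary~\ref{cor:gi-IR-plus}.)
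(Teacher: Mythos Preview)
Your lower-bound sketch is close to the paper's argument: it also partitions into $\lfloor n/(4k)\rfloor$ length-$4k$ sections and shows Staller can force $\lceil\log(k+1)\rceil+1$ moves per section by a halving argument. The paper makes the strategy explicit---when Dominator indicates $v_\ell$ inside the current undominated interval $\{v_i,\dots,v_j\}$ of the section's middle $2k$ vertices, Staller selects $v_{\ell-k}$ if $\ell$ is in the left half and $v_{\ell+k}$ otherwise---which guarantees at most $\lceil|U'|/2\rceil$ of the middle is newly dominated; your ``dominate as few new vertices as possible'' is a correct-in-spirit but less precise version of this.

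Your upper bound, however, has a genuine gap. Indicating the \emph{leftmost} undominated vertex $u=v_j$ of a block does not produce halving, because the worst Staller response is not $k$ steps to the right but $k$ steps to the \emph{left}. Since $v_j$ is undominated, the vertex $v_{j-k}\in N[v_j]$ is unselected; Staller may choose it, and as everything left of $v_j$ is already dominated (previous blocks are finished, and $v_j$ is leftmost in the current one), the only newly dominated vertex is $v_j$ itself. With this reply Staller forces essentially one new vertex per move, so a block of length $2k+2$ costs $\Theta(k)$ moves, not $\lceil\log(k+1)\rceil+2$, and the bound collapses to $\Theta(n)$.

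The paper avoids this with a two-phase strategy. In Phase~1, while the rightmost gap has at least $2k+2$ vertices and left end $v_g$, Dominator indicates $v_{g+2k+1}$; whatever Staller selects lies in $\{v_{g+k+1},\dots,v_{g+3k+1}\}$ and creates a dominated interval of exactly $2k+1$ vertices, leaving a gap of at most $2k+1$ to its left. After Phase~1 the path alternates gaps $U_i$ of size at most $2k+1$ and intervals $D_i$ of size exactly $2k+1$, one Phase-1 move per interval. In Phase~2 Dominator bisects each gap (indicate the midpoint); bisection \emph{does} halve here because a gap of size at most $2k+1$ fits in a single closed neighborhood, so any reply dominates one of the two halves entirely. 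This gives at most $\lceil\log(k+1)\rceil+1$ moves per gap, hence at most $\lceil\log(k+1)\rceil+2$ moves per block $(U_i,D_i)$ of at least $2k+2$ vertices. The missing idea in your argument is that halving only works once the gap is short enough to fit in one closed neighborhood, and a separate phase is needed to manufacture that structure.
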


\begin{proof}
	For the lower bound we may assume without loss of generality that $n$ is a multiple of $4k$.
	We divide $P_n^k$ into $\frac{n}{4k}$ sections of $4k$ consecutive vertices.
	Staller's strategy is to consider each section separately.
	Denote the vertices in one such section $S$ by $v_1, \ldots, v_{4k}$.
	Note that the ``middle'' subsection $S^-$ formed by the vertices $v_{k+1}, \ldots, v_{3k}$ can only be dominated by vertices within $S$.
	Let us also use the notation $S'$ for the ``left'' half $v_1, \ldots, v_{2k}$ of $S$, and write $S''$ for its ``right'' half $v_{2k+1}, \ldots, v_{4k}$.
	
	The first time Dominator indicates a vertex in $S$, with no loss of generality it is one of the vertices from $S'$.
	Staller then selects $v_k$.
	This selection dominates the entire $S^-\cap S'$, but leaves the $k$ vertices of $S^-\cap S''$ undominated.
	More generally, no matter what happened during the game before this move, the set $U$ of undominated vertices inside $S$ is a set $\{v_i,v_{i+1},\dots,v_j\}$ of consecutive vertices (or just a singleton $\{v_i\}$), because only some vertices at the right end of $S''$ may possibly be dominated by a selection from the successor section of $S$.
	Here $j>3k$ may occur, but Staller's strategy will handle this situation as if $j\leq 3k$ held.
	So, attention will be restricted to $U'=\{v_i,v_{i+1},\dots,v_{j'}\}$ where $j'=\min(j,3k)$; hence $2k+1\leq i\leq j'\leq 3k$. The crucial point is:
	\begin{itemize}
	 \item[(*)] If $|U'|\geq 2^s$, for an integer $s\geq 0$, then Staller can achieve that at least $s+1$ steps are needed to make the entire $U'$ dominated during the rest of the game.
	\end{itemize}
	We prove this by induction on $s$, the case of $s=0$ being trivial.
	Once (*) is proved, it follows that the $S^-$ subsection inside each of the $\frac{n}{4k}$ sections $S$ requires at least $\left\lceil \log \,(k+1) \right\rceil+1$ moves under a properly chosen strategy of Staller, hence implying the claimed lower bound.
	
	In a general move of the game when at least one selection has already been made in $S$, assume that the set of undominated vertices inside $S$ is $U\subseteq S''$ (also allowing $U\not\subset S^-$, but $U'\subset S^-$ will hold by definition), and let Dominator indicate vertex $v_\ell$ from $U$.
	\begin{itemize}
	 \item[(a)] If $\ell>3k$ (i.e., $v_\ell\in U\setminus U'$), then Staller imagines as if Dominator has indicated $\ell=3k$ and proceeds as in case (c) below.
	 \item[(b)] If $\ell\leq 3k$ and $\ell-i< j-\ell$, then Staller selects $v_{\ell-k}$.
	 \item[(c)] If $\ell\leq 3k$ and $\ell-i\geq j-\ell$, then Staller selects $v_{\ell+k}$.
	\end{itemize}
	
	In either case, Staller's selection is inside $S$, hence it has no influence on the middle subsection of any section other than $S$.
	Moreover, in case (a) the selected vertex is $v_{4k}$, therefore it dominates $v_\ell$.
	Finally, the number of vertices in $U'$ that become dominated by Staller's selection is at most $\lceil \frac{1}{2}|U'| \rceil$.
	Consequently, if $|U'|\geq 2^s$, then at least $2^{s-1}$ vertices remain undominated inside $S^-$ after this move.
	This implies (*) by induction, and completes the proof of the lower bound.
	
	\medskip
	
	As a preparation to the proof of the upper bound, we observe the following.
	\begin{itemize}
	 \item[(**)] If $U=\{v_i,v_{i+1},\dots,v_j\}$ is a set of consecutive undominated vertices, and $j-i+1\leq 2k+1$, then Dominator can achieve in at most $\lceil \log \, (j-i+2) \rceil$ moves that the entire $U$ becomes dominated.
	\end{itemize}

	The strategy is simple:
	
	\begin{enumerate}
	 \item Indicate $v_\ell$, where $ \ell=\lfloor\frac{1}{2}(i+j) \rfloor$.
	 \item Depending on Staller's selection, update $U$ to its part that remains undominated.
	 \item Return to step 1 as long as $U$ is non-empty.
	\end{enumerate}
	
	To verify that this strategy proves (**), it suffices to observe that the undominated vertices of $U$ remain consecutive after each move---this is because $j-i+1\leq 2k+1$ has been assumed---and that their number gets halved in each move.
	
	\medskip
	
	Now we provide a strategy for Dominator, who will proceed from left to right in $P_n^k$.
	The situation before each move can be described with an alternating sequence of sections that we call ``gaps'' $U_i$, consisting of consecutive undominated vertices, and ``intervals'' $D_i$, consisting of consecutive dominated vertices.
	At the beginning we have no intervals, and just one gap $U_1=\{v_1,\dots,v_n\}$.
	Let Dominator play according to the following rules.
	 \begin{enumerate}
	  \item While $v_n$ is undominated and the last gap has at least $2k+2$ vertices:
	   \begin{enumerate}
	    \item Identify the first vertex $v_g$ of the gap that contains $v_n$.
	    \item Indicate the vertex $v_{\ell}$, where $\ell=g+2k+1$.
	    \item Depending on Staller's selection, update the alternating sequence $U_1$, $D_1$, $U_2$, $D_2$, $\dots$ of gaps and intervals.
	   \end{enumerate}
	  \item In each of the gaps $U_1,U_2,\dots,$ apply the strategy described above for (**).
	 \end{enumerate}

	After the first phase, for the sake of a more transparent computation, we split the gap-interval vertex partition into blocks $B_1=(U_1,D_1)$, $B_2=(U_2,D_2)$, $\dots,$ $B_m=(U_m,D_m)$.
	If $v_n$ remains undominated before the second phase, i.e.\ $v_n\in U_m$, we artificially define $D_m$ as the emptyset.
	Observe that except for the last block~$B_m$, each gap $U_i$ has at least 1 and at most $2k+1$ vertices, and each interval $D_i$ has exactly $2k+1$ vertices.
	In particular, also the first block begins with a gap, as $v_1$ is undominated.

	Consider any $B_i$ with $i<m$.
	If $2^s \leq |U_i|< 2^{s+1}$, (**) guarantees that Dominator can achieve that $U_i$ becomes dominated within $s+1 \leq \lceil \log \,(2k+2) \rceil$ moves (and, according to (*), that is the best he can do).
	Moreover, $D_i$ was dominated in just one step.
	Since $|U_i\cup D_i|\geq 2k+1+2^s\geq 2k + 2$ whenever $i<m$, the average number of moves required to dominate one vertex in $B_i$ is
	$$
	  \frac{s+2}{|U_i|+|D_i|} \leq \frac{\lceil \log \,(2k+2) \rceil + 1}{2k + 2}
	$$
	because $s$ is an integer and $2^s \leq |U_i| \leq 2k+1$.
	This upper bound is valid for all vertices in the whole $B_1 \cup \cdots \cup B_{m-1}$.

	To complete the proof of the theorem, it suffices to note that $U_m$ becomes dominated in at most $\left\lceil \log \,(2k+2) \right\rceil = \left\lceil \log \,(k+1) \right\rceil + 1$ moves and $D_m$ in just one move (or none if it is empty).
\end{proof}

Let us state the particular case $k=2$ of the general lower bound separately, as it already has important consequences.

\begin{corollary}
	\label{cor:2nd-power}
	If $n \geq 2$, then $\gi(P_n^2) \geq 3 \lfloor \frac{n}{8} \rfloor$ 
\end{corollary}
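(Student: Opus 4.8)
The plan is to obtain the stated bound directly from the lower bound of Theorem~\ref{thm:powers-plus} by specializing $k=2$. Setting $k=2$ gives $\left\lceil \log(k+1)\right\rceil + 1 = \left\lceil \log 3\right\rceil + 1 = 2 + 1 = 3$, and $\left\lfloor \frac{n}{4k} \right\rfloor = \left\lfloor \frac{n}{8} \right\rfloor$, so the lower bound of the theorem reads exactly $\gi(P_n^2) \geq 3\left\lfloor \frac{n}{8}\right\rfloor$, which is the claim.

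The only mild subtlety is that Theorem~\ref{thm:powers-plus} is stated for $n \geq k \geq 2$, i.e.\ for $n \geq 2$ when $k = 2$, so the hypothesis $n \geq 2$ in the corollary matches precisely and nothing extra needs to be said. I would also note (though it is already implicit in the proof of the theorem) that the lower bound argument proceeds by first reducing to the case where $n$ is a multiple of $4k = 8$ — Staller simply ignores the leftover fewer-than-$8$ vertices at the end, which can only decrease the count — and then running Staller's halving strategy on each of the $\left\lfloor \frac{n}{8}\right\rfloor$ blocks of $8$ consecutive vertices, each of which is forced to cost at least $3$ of Staller's selections by property~(*) with $s = 2$ (since the middle subsection $S^-$ of a block has $k+1 = 3 \geq 2^2$... actually $3 < 4$, so one uses $s$ with $2^s \le 3$, giving $s = 1$ and $s+1 = 2$ — wait, this would only give $2$, not $3$).

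Let me reconsider: the theorem's general lower bound is $(\left\lceil \log(k+1)\right\rceil + 1)\left\lfloor \frac{n}{4k}\right\rfloor$, and for $k=2$ the quantity $\left\lceil \log(k+1)\right\rceil + 1 = \left\lceil \log 3\right\rceil + 1$. Since $\log 3 \approx 1.585$, we have $\left\lceil \log 3\right\rceil = 2$, hence the coefficient is $3$. The internal consistency with~(*) is that $|S^-| = k = 2$? No — $S^-$ is $v_{k+1},\dots,v_{3k}$, which has $2k = 4$ vertices for $k=2$; but the relevant bound in the theorem's proof is $\left\lceil \log(k+1)\right\rceil + 1$ moves for the middle subsection, and with $U' \subseteq S^-$ of size up to... the argument gives that $|U'|$ can be forced to start at $k+1 = 3$, and $2^s \le 3$ gives $s = 1$, so $s+1 = 2$ moves from~(*) — but the theorem claims $\left\lceil \log(k+1)\right\rceil + 1 = 3$.

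Given this tension in my own reconstruction, the honest statement of the plan is: the corollary is an immediate specialization, and I would simply invoke Theorem~\ref{thm:powers-plus} with $k = 2$ without reproving anything. Concretely:

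\begin{proof}
Apply Theorem~\ref{thm:powers-plus} with $k = 2$. Since $\left\lceil \log(2+1)\right\rceil + 1 = \left\lceil \log 3\right\rceil + 1 = 3$ and $\left\lfloor \frac{n}{4\cdot 2}\right\rfloor = \left\lfloor \frac{n}{8}\right\rfloor$, the lower bound in Theorem~\ref{thm:powers-plus} becomes $\gi(P_n^2) \geq 3\left\lfloor \frac{n}{8}\right\rfloor$.
\end{proof}

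The main obstacle, if any, is purely bookkeeping: making sure that the floor and ceiling arithmetic in the specialization is carried out correctly and that the hypothesis $n \geq 2$ is exactly what Theorem~\ref{thm:powers-plus} requires for $k = 2$; there is no new mathematical content.
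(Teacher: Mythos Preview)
Your final proof is correct and matches the paper exactly: the paper presents Corollary~\ref{cor:2nd-power} as the immediate specialization of Theorem~\ref{thm:powers-plus} to $k=2$, with no separate argument. Your mid-stream worry about~(*) is unfounded once you include Staller's first selection $v_k$ in the count: after that move $|U'|=k$, and (*) with $s=\lfloor\log k\rfloor$ forces $\lfloor\log k\rfloor+1$ further moves, giving a total of $\lfloor\log k\rfloor+2=\lceil\log(k+1)\rceil+1$, which for $k=2$ is indeed~$3$.
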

	
	\medskip
	
Since all $P_n^2$ are chordal graphs, we have $\IR(P_{n}^2) = \Gamma(P_n^2) =  \alpha(P_{n}^2) = \left \lceil \frac{n}{3} \right \rceil \leq \frac{n+2}{3}$ by Theorem~\ref{thm:topthreechordal}. 
Together with Corollary~\ref{cor:2nd-power}, noting also that $3 \lfloor \frac{n}{8} \rfloor \geq \frac{3}{8}n-\frac{21}{8}$, we get $\gi(P_n^2) - \IR(P_n^2) \geq  \frac{n-79}{24}$, which can be arbitrarily large when $n$ grows.

\begin{corollary}
	\label{cor:gi-IR-plus}
	There exist graphs $G$ such that $\gi(G) - \IR(G)$ is arbitrarily large.
\end{corollary}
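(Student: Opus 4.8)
The plan is to obtain Corollary~\ref{cor:gi-IR-plus} as an immediate consequence of the estimates already in hand, with essentially no new work: it is a pure bookkeeping step combining Corollary~\ref{cor:2nd-power} with the value of $\IR(P_n^2)$. First I would recall that $P_n^2$ is a chordal graph (the square of a path inherits chordality), so Theorem~\ref{thm:topthreechordal} applies and yields $\IR(P_n^2)=\Gamma(P_n^2)=\alpha(P_n^2)$. Second, I would compute $\alpha(P_n^2)=\lceil n/3\rceil$, which is standard: an independent set in $P_n^2$ is exactly a set of vertices on the path that are pairwise at distance at least $3$, so a maximum such set has size $\lceil n/3\rceil\le (n+2)/3$. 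Hence $\IR(P_n^2)\le (n+2)/3$.

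Third, I would invoke the lower bound $\gi(P_n^2)\ge 3\lfloor n/8\rfloor$ from Corollary~\ref{cor:2nd-power} and replace the floor by the elementary inequality $\lfloor n/8\rfloor \ge n/8 - 7/8$, so that $3\lfloor n/8\rfloor \ge \tfrac38 n - \tfrac{21}{8}$. Subtracting the two estimates gives
\[
\gi(P_n^2)-\IR(P_n^2)\ \ge\ \Big(\tfrac{3}{8}n-\tfrac{21}{8}\Big)-\tfrac{n+2}{3}\ =\ \tfrac{n}{24}-\tfrac{21}{8}-\tfrac{2}{3}\ =\ \tfrac{n-79}{24},
\]
which tends to infinity with $n$. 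Taking, say, $G=P_n^2$ for $n\to\infty$ then establishes the corollary. (One should double-check the arithmetic reduction $\tfrac{21}{8}+\tfrac23=\tfrac{63+16}{24}=\tfrac{79}{24}$, which is exactly the constant quoted in the text preceding the corollary.)

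There is no real obstacle here; the only mild subtlety is making sure the two bounds are phrased over a common class of $n$ (the lower bound in Corollary~\ref{cor:2nd-power} is stated for all $n\ge 2$ with the floor already absorbing divisibility issues, so no restriction to multiples of $8$ is needed), and that the chordality of $P_n^2$ is actually justified rather than merely asserted. Since all the hard analytic content is already contained in Theorem~\ref{thm:powers-plus} and its Corollary~\ref{cor:2nd-power}, the proof of Corollary~\ref{cor:gi-IR-plus} is a one-line deduction: the family $\{P_n^2\}_{n\ge 2}$ witnesses that $\gi(G)-\IR(G)$ is unbounded.
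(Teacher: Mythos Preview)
Your proposal is correct and follows essentially the same approach as the paper: both use the family $P_n^2$, invoke chordality together with Theorem~\ref{thm:topthreechordal} to obtain $\IR(P_n^2)=\alpha(P_n^2)=\lceil n/3\rceil\le(n+2)/3$, combine this with the lower bound $\gi(P_n^2)\ge 3\lfloor n/8\rfloor\ge \tfrac{3}{8}n-\tfrac{21}{8}$ from Corollary~\ref{cor:2nd-power}, and arrive at the identical estimate $\gi(P_n^2)-\IR(P_n^2)\ge\tfrac{n-79}{24}$.
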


The above corollary also implies that there exist graphs $G$ such that $\gi(G) - \Gamma(G)$ is arbitrarily large.

\section{Concluding remarks}
\label{sec:conclude}


In Section~\ref{sec:upper} we characterized the graphs $G$ with minimum degree $\delta$ and order $n \geq 2 \delta + 2$, that satisfy the extremal value $\gi(G) = n - \delta$. It would be interesting to extend the characterization to graphs of smaller order relative to $\delta$.

\begin{question}
For which graphs $G$ of order $n$ and minimum degree $\delta$ such that $n < 2\delta + 2$ does $\gi(G) = n - \delta$ hold?
\end{question}

In Section~\ref{sec:alpha}, we proved for two important families of bipartite graphs (namely trees and grids) that the indicated domination number equals their independence number. In addition, computer check confirmed this to hold for all (bipartite) graphs of order at most $10$. We could not find any bipartite graph that would not have this property, so we pose this as an open problem:

\begin{question}
	Is it true that for every bipartite graph $G$ we have $\gi(G) = \alpha(G)$?
\end{question}

In the same section we also proved that connected, cubic bipartite graphs $G$ of order at most $12$ satisfy $\gi(G)=\frac{n(G)}{2}=\alpha(G)$. We think this result could be generalized to all connected, cubic bipartite graphs, but we were unable to prove it. Therefore we pose the following question.

\begin{question}
Is it true that $\gi(G)=\frac{n(G)}{2}$ for any cubic bipartite graph $G$?
\end{question}

It would also be interesting to find whether there is an upper bound on the indicated domination number in the class of all connected cubic graphs. In particular, is there a constant $C<1$ such that $\gi(G)\le C\cdot n$ holds for all connected cubic graphs $G$? Note that $C \geq \frac12$ due to the $3$-cube and the Petersen graph, as checked by computer.

\paragraph{Acknowledgements.}

This research was supported in part by the National Research, Development and Innovation Office, NKFIH Grants SNN 129364 and FK 132060, by the Slovenian Research and Innovation Agency (ARIS) under the grants P1-0297, J1-2452, J1-3002, J1-4008, N1-0285, N1-0218 and Z1-50003, and by the European Union (ERC, KARST, 101071836). The authors thank the Faculty of Natural Sciences and Mathematics of the University of Maribor for hosting the Workshop on Games on Graphs in June 2023.

\end{document}